\newtheorem{theorem}{Theorem}[section]
\newtheorem{lemma}[theorem]{Lemma}
\newtheorem{proposition}[theorem]{Proposition}
\newtheorem{corollary}[theorem]{Corollary}
\theoremstyle{definition}
\newtheorem{definition}[theorem]{Definition}
\newtheorem{example}[theorem]{Example}
\newtheorem{remark}[theorem]{Remark}
\newcommand{\Z}{\mathbb{Z}}
\newcommand{\Q}{\mathbb{Q}}
\newcommand{\e}{\varepsilon}
\begin{document}


\title[The $V_1$- and $V_2$-polynomials of a long virtual knot]
{The $V_1$- and $V_2$-polynomials of a long virtual knot}

\author[Shin SATOH]{Shin SATOH}
\address{Department of Mathematics, Kobe University, 
Rokkodai-cho 1-1, Nada-ku, Kobe 657-8501, Japan}
\email{shin@math.kobe-u.ac.jp}

\author[Kodai WADA]{Kodai WADA}
\address{Department of Mathematics, Kobe University, 
Rokkodai-cho 1-1, Nada-ku, Kobe 657-8501, Japan}
\email{wada@math.kobe-u.ac.jp}

\makeatletter
\@namedef{subjclassname@2020}{%
  \textup{2020} Mathematics Subject Classification}
\makeatother
\subjclass[2020]{57K12, 57K14, 57K16}

\keywords{Long virtual knot, long welded knot, finite type invariant, 
crossing change, virtualization, Delta-move}

\thanks{This work was supported by JSPS KAKENHI Grant Numbers JP22K03287 and JP23K12973.}


\begin{abstract} 
We introduce two polynomial invariants 
$V_1(K;t)$ and $V_2(K;t)$ of a long virtual knot $K$, 
which generalize the degree-two finite type invariants $v_{2,1}$ and $v_{2,2}$ 
of Goussarov, Polyak, and Viro. 
We establish their fundamental properties 
and show that any pair of Laurent polynomials
can be realized as $(V_1(K;t),V_2(K;t))$ for some long virtual knot $K$.  
While these polynomials are not finite type invariants of any degree 
with respect to virtualizations, 
their first derivatives at $t=1$ define finite type invariants of degree three. 
As an application, 
we obtain an explicit Gauss diagram formula for the $\alpha_3$-invariant. 
\end{abstract} 

\maketitle


\section{Introduction}\label{sec1} 

Finite type invariants play a central role in knot theory,
providing a powerful framework for understanding knot invariants via a filtration by degree. 
Since their introduction by Vassiliev~\cite{Vas}, 
through the study of the zeroth cohomology of the space of 
smooth embeddings of $S^1$ into $\mathbb{R}^3$,
finite type invariants have influenced a wide range of developments in low-dimensional topology; 
see, for example,~\cite{Bar,CDM,Gou,Hab,Oht}. 
An equivalent and more combinatorial formulation
in terms of crossing changes was later given by Birman and Lin~\cite{BL},
and Kauffman~\cite{Kau} extended this perspective to the setting of virtual knots.

In~\cite{GPV}, Goussarov, Polyak, and Viro 
defined a new notion of finite type invariants 
for virtual knots and long virtual knots
using the operation of virtualizations.
They showed that any such invariant of degree $m$
is a finite type invariant of degree at most $m$ 
with respect to crossing changes,
and conversely that any finite type invariant of classical knots 
with respect to crossing changes 
arises as a specialization of a finite type invariant 
of long virtual knots with respect to virtualizations~\cite{GPV}.
This establishes long virtual knots
as a natural and important framework
for understanding finite type phenomena
in both classical and virtual knot theories,  
and motivates the systematic study of invariants of long virtual knots.

In degree two, the space of finite type invariants 
with respect to virtualizations 
for long virtual knots is two-dimensional~\cite{GPV}.
More concretely, Goussarov, Polyak, and Viro constructed
two integer-valued finite type invariants,
denoted by $v_{2,1}(K)$ and $v_{2,2}(K)$,
defined by Gauss diagram formulae. 
These invariants coincide with the second coefficient of the Conway polynomial
for long classical knots, but are independent in the virtual category.

The main purpose of this paper is to introduce
polynomial generalizations of these degree-two invariants.
More precisely, to each long virtual knot $K$, 
we associate two Laurent polynomials 
$V_1(K;t)$ and $V_2(K;t)$, 
defined in terms of Gauss diagrams and 
intersection numbers arising from surface realizations of $K$. 
This construction encodes both combinatorial and 
geometric information in a unified manner. 
Our first main result asserts that these polynomials
are well-defined invariants of long virtual knots (Theorem~\ref{thm31}).
Moreover, they recover the original degree-two invariants
by specialization at $t=1$:
\[
V_1(K;1)=v_{2,1}(K) \text{ and } V_2(K;1)=v_{2,2}(K).
\]

We establish symmetry properties, 
additivity under the concatenation product of long virtual knots, 
and prove that any pair of Laurent polynomials
can be realized as $(V_1(K;t),V_2(K;t))$ for some long virtual knot $K$ (Theorem~\ref{thm43}). 

We further investigate the finite type properties of the $V_1$- and $V_2$-polynomials. 
We show that $V_{1}(K;t)$ and $V_{2}(K;t)$ are not finite type invariants of any degree with respect to virtualizations,
while they remain finite type invariants of degree two
with respect to crossing changes (Theorem~\ref{thm51}). 
This highlights a sharp contrast
between the two notions of finite type theory
in the virtual setting.

We also study the behavior of the $V_1$- and $V_2$-polynomials under $\Delta$-moves. 
In classical knot theory, a well-known fact of Okada~\cite{Oka} states that a single $\Delta$-move changes 
the second coefficient of the Conway polynomial by $\pm1$. 
As a generalization of this fact, 
we show that a single $\Delta$-move
changes each of our polynomials by a monomial. 
As a consequence, 
the difference $V_1(K;t)-V_2(K;t)$
is invariant under $\Delta$-moves.
This yields a lower bound
for the $\Delta$-move distance between long virtual knots (Theorem~\ref{thm62}). 

Beyond the polynomials, we analyze their first derivatives at $t=1$.
We show that the integers $V_1'(K;1)$ and $V_2'(K;1)$
define finite type invariants of degree three with respect to virtualizations. 
In particular, we provide an explicit Gauss diagram formula for the invariant $V_1'(K;1)$. 

We further show that $V_1'(K;1)$ is invariant under welded moves. 
Consequently, it defines an invariant of long welded knots.
This enables us to relate $V_1'(K;1)$ to finite type invariants of long welded knots arising from the normalized Alexander polynomial.
The normalized Alexander polynomial $\Delta(K;t)$ of a long welded knot $K$, 
characterized by $\Delta(K;1)=1$ and $\Delta'(K;1)=0$,
gives rise to a sequence of invariants
\[
\alpha_n(K)=\left.\frac{1}{n!}\frac{d^n}{dt^n}\Delta(K;t)
\right|_{t=1}, 
\]
which are known to be finite type invariants of degree $n$
with respect to virtualizations~\cite{HKS,MY}.
Our main result in this direction shows that 
\[
V_1'(K;1)=\alpha_2(K)+\alpha_3(K) 
\] 
for any long welded knot $K$ (Theorem~\ref{thm74}). 
As a consequence, we derive an explicit Gauss diagram formula
for the $\alpha_3$-invariant (Corollary~\ref{cor76}). 

The paper is organized as follows.
In Section~\ref{sec2}, we recall basic definitions 
and known results on long virtual knots, intersection polynomials, and 
Goussarov-Polyak-Viro's invariants $v_{2,1}$ and $v_{2,2}$.
Section~\ref{sec3} introduces the polynomials 
$V_1(K;t)$ and $V_2(K;t)$, and proves that they
are well-defined invariants of long virtual knots. 
In Section~\ref{sec4}, we analyze their behavior 
under symmetries and product operations, and 
prove their realization theorem. 
Section~\ref{sec5} is devoted to finite type properties. 
In Section~\ref{sec6}, we study the behavior of 
the $V_1$- and $V_2$-polynomials under $\Delta$-moves.
Finally, in Section~\ref{sec7}, 
we prove that $V_1'(K;1)$ and $V_2'(K;1)$ are 
degree-three finite type invariants with respect to virtualizations, 
and establish the relationship between $V_1'(K;1)$ 
and the $\alpha_2$- and $\alpha_3$-invariants.


\section{Preliminaries}\label{sec2} 

In this section, we collect several basic notions and known results
that will be used throughout the paper.
We recall the definitions of long virtual knots, 
their equivalent descriptions,
the intersection polynomials $F_{ab}(K;t)$,
and the $v_{2,1}$- and $v_{2,2}$-invariants.

\subsection{Long virtual knots}
A \emph{long virtual knot diagram} 
is the image of a generic immersion of 
the real line $\mathbb{R}$ 
into the Euclidean plane $\mathbb{R}^{2}$ 
with finitely many transverse double points 
such that: 
\begin{enumerate}
\item
each transverse double point is designated as 
either a real crossing or virtual crossing, and 
\item
outside a compact set 
the image coincides with the $x$-axis of $\mathbb{R}^{2}$, that is, 
it agrees with the $x$-axis on $|x|>M$ for some sufficiently large~$M$. 
\end{enumerate}
See the top of Figure~\ref{3diagrams}. 

\begin{figure}[htbp]
  \centering
    \begin{overpic}[]{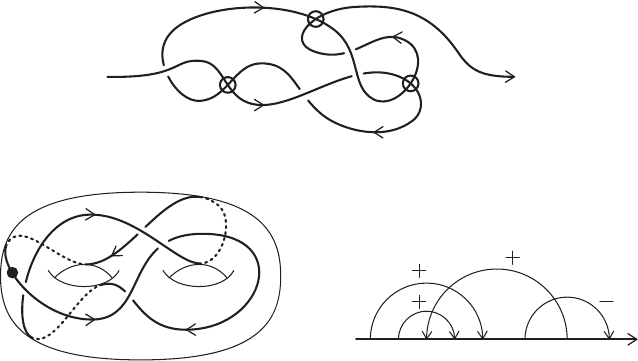}
      \put(30,134){$-\infty$}
      \put(250,134){$+\infty$}
      \put(66,143){$c_{1}$}
      \put(141,118){$c_{2}$}
      \put(164,126.5){$c_{3}$}
      \put(168,157){$c_{4}$}
      \put(95,92){a long virtual knot diagram}
      \put(-8,40){$p$}
      \put(16,35){$c_{1}$}
      \put(59,17){$c_{2}$}
      \put(75,47){$c_{3}$}
      \put(62,70){$c_{4}$}
      \put(27,-15){a surface realization}
      \put(186,39){$c_{1}$}
      \put(186,23){$c_{2}$}
      \put(228,48){$c_{3}$}
      \put(271,34){$c_{4}$}
      \put(205,-15){a Gauss diagram}
    \end{overpic}
\vspace{5mm}
  \caption{Three descriptions of a long virtual knot}
  \label{3diagrams}
\end{figure}

There are two equivalent descriptions of a long virtual knot diagram. 
One is a \emph{surface realization}, 
which is a knot diagram with a basepoint on a closed connected oriented surface. 
The other is a \emph{Gauss diagram}, which consists of a line 
equipped with finitely many oriented chords endowed with signs. 
These descriptions are illustrated at the bottom of Figure~\ref{3diagrams}. 
For further details, see \cite{CKS,GPV,Kau}. 

A \emph{long virtual knot} is an equivalence class of 
long virtual knot diagrams under Reidemeister moves 
I--VII. 
Equivalently, it can be regarded as an equivalence class of 
\begin{enumerate}
\item
surface realizations under Reidemeister moves I--III 
together with (de)stabilizations \cite{CKS}, or  
\item
Gauss diagrams under Reidemeister moves I--III~\cite{GPV}. 
\end{enumerate}
Throughout this paper, 
all long virtual knots are assumed to be oriented. 

\subsection{The intersection polynomials $F_{ab}(K;t)$}\label{subsec22}
For a long virtual knot $K$, 
twelve polynomial invariants 
\[
F_{ab}(K;t),\ G_{ab}(K;t),\ \text{and}\ H_{ab}(K;t) 
\quad (a,b\in\{0,1\}) 
\]
were introduced in~\cite{NNSW}. 
These invariants are collectively referred to as 
the \emph{intersection polynomials} of $K$. 
In this subsection, we recall the definition of $F_{ab}(K;t)$ 
using a surface realization of $K$. 

Let $(\Sigma,D)$ be a surface realization of $K$ 
with a basepoint~$p$, 
where $\Sigma$ is a closed connected oriented surface.  
Let $c_1,\dots,c_{n}$ be the crossings of $D$, 
and let $\e_i\in\{\pm1\}$ denote the sign of $c_i$ $(i=1,\dots,n)$. 
For each $i$, 
perform a smoothing operation at $c_i$, 
which yields a pair of closed curves on $\Sigma$. 
Let $\alpha_i$ denote the closed curve 
that does not contain the basepoint $p$,  
as shown in Figure~\ref{alpha}. 

\begin{figure}[htbp]
  \centering
    \begin{overpic}[]{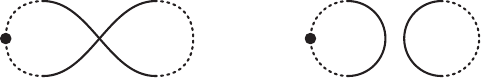}
      \put(-10,17){$p$}
      \put(44.5,28){$c_i$}
      \put(209,17){$\alpha_i$}
      \put(99.5,16){$\xrightarrow{\text{smoothing}}$}
    \end{overpic}
  \caption{The curve $\alpha_i$ on $\Sigma$}
  \label{alpha}
\end{figure}

For $1\leq i,j\leq n$, 
the intersection number $\alpha_i\cdot\alpha_j\in\Z$ 
is defined by viewing $\alpha_i$ and $\alpha_j$ 
as homology classes in $H_1(\Sigma)$. 
The intersection number is 
invariant under (de)stabilizations of $\Sigma$. 

A crossing $c_i$ is said to be \emph{of type $0$} (resp. \emph{of type $1$}) 
if, when traveling along $D$ from the basepoint $p$ 
according to the orientation, 
the overcrossing (resp. undercrossing) of $c_i$ is encountered first.  
For $a\in\{0,1\}$, set 
\[I_a(D)=\{i\mid \mbox{$c_i$ is of type $a$}\}.\]
For $a,b\in\{0,1\}$, the Laurent polynomial is defined by 
\[F_{ab}(K;t)=
\sum_{i\in I_a(D),\, j\in I_b(D)}
\e_i\e_j(t^{\alpha_i\cdot\alpha_j}-1)\in{\Z}[t,t^{-1}].\]
It was shown in \cite[Theorem~2.2]{NNSW} that 
$F_{ab}(K;t)$ is an invariant of $K$. 
Moreover, $F_{ab}(K;t)$ is not a finite type invariant 
of any degree with respect to virtualizations, 
although it is a finite type invariant of degree two 
with respect to crossing changes 
\cite[Theorems~6.3 and 6.7]{NNSW}.

\subsection{The $v_{2,1}$- and $v_{2,2}$-invariants}\label{subsec23}

Let $D$ be a long virtual knot diagram 
representing a long virtual knot $K$.
When no confusion arises, we use the same symbol $D$
to denote the associated Gauss diagram. 
Likewise, we use the same notation $c_1,\dots,c_n$
for the crossings of $D$
and the corresponding chords of the Gauss diagram.

Two chords of $D$ are said to be \emph{linked} 
if their endpoints appear alternately 
along the underlying line of $D$. 
In particular, such a pair of chords is \emph{inwardly linked} 
(resp. \emph{outwardly linked}) 
if their orientations are as shown on the left (resp. right)  
of Figure~\ref{inward}. 
Set 
\begin{align*}
J_{\rm in}(D)&=\{(i,j)\mid 
\text{$c_i$ and $c_j$ are inwardly linked with 
$i\in I_0(D)$ and $j\in I_1(D)$}\}, \\
J_{\rm out}(D)&=\{(i,j)\mid 
\text{$c_i$ and $c_j$ are outwardly linked with 
$i\in I_1(D)$ and $j\in I_0(D)$}\}. 
\end{align*}

\begin{figure}[htbp]
  \centering
    \begin{overpic}[]{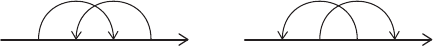}
      \put(-3,-12){inwardly linked chords}
      \put(111,-12){outwardly linked chords}
      \put(15,20){$c_{i}$}
      \put(67,20){$c_{j}$}
      \put(132,20){$c_{i}$}
      \put(186,20){$c_{j}$}
    \end{overpic}
\vspace{1.5em}
  \caption{Inwardly and outwardly linked chords}
  \label{inward}
\end{figure}

In~\cite[Section~3.2]{GPV}, Goussarov, Polyak, and Viro defined 
two integer-valued invariants of $K$ by 
\[v_{2,1}(K)=\sum_{(i,j)\in J_{\rm in}(D)}\e_i\e_j \text{ and }
v_{2,2}(K)=\sum_{(i,j)\in J_{\rm out}(D)}\e_i\e_j. 
\]
These are called the \emph{$v_{2,1}$-} and \emph{$v_{2,2}$-invariants} 
of $K$, respectively. 
They are finite type invariants of degree two 
with respect to virtualizations, 
and hence also with respect to crossing changes. 
Moreover, if $K$ is a long classical knot, 
then both $v_{2,1}(K)$ and $v_{2,2}(K)$ coincide with 
the second coefficient $a_{2}(K)$ of the Conway polynomial 
of the closure of $K$.


\section{The $V_1$- and $V_2$-polynomials}\label{sec3} 

The aim of this section is to introduce 
two polynomial invariants
by combining the intersection numbers
appearing in the definition of $F_{ab}(K;t)$
with the notions of inwardly and outwardly linked chords 
underlying the definitions of $v_{2,1}(K)$ and $v_{2,2}(K)$.

Let $K$ be a long virtual knot. 
For a diagram $D$ representing $K$,  
we define two Laurent polynomials by 
\[
V_1(D;t)=\sum_{(i,j)\in J_{\rm in}(D)}\e_i\e_j t^{\alpha_i\cdot\alpha_j} \text{ and }
V_2(D;t)=\sum_{(i,j)\in J_{\rm out}(D)}\e_i\e_j t^{\alpha_i\cdot\alpha_j}.
\]

\begin{theorem}\label{thm31}
The Laurent polynomials $V_1(D;t)$ and $V_2(D;t)$ 
are independent of the choice of any diagram $D$ representing $K$, 
and hence define invariants of $K$. 
\end{theorem}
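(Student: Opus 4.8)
The plan is to verify invariance under each of the three Reidemeister moves on Gauss diagrams, since a long virtual knot is an equivalence class of Gauss diagrams under Reidemeister moves I--III. The quantities $V_1(D;t)$ and $V_2(D;t)$ are sums over linked pairs of chords weighted by $\e_i\e_j t^{\alpha_i\cdot\alpha_j}$, so I must check that the sum is preserved whenever two Gauss diagrams differ by a single move. A crucial preliminary observation is that the intersection numbers $\alpha_i\cdot\alpha_j$ are already known (from Subsection~\ref{subsec22}) to be invariant under (de)stabilizations, and the established invariance of $F_{ab}(K;t)$ in \cite[Theorem~2.2]{NNSW} means that the combinatorial data entering our sums behaves predictably. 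I would exploit this: both $V_1$ and $V_2$ are built from exactly the same chord-pairing data $(J_{\rm in}, J_{\rm out})$ as $v_{2,1}, v_{2,2}$ and the same intersection numbers as $F_{ab}$, so the verification should parallel the known arguments for those invariants.

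\smallskip

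First I would treat the first Reidemeister move, which adds or removes a single isolated chord (one whose endpoints are adjacent on the line, with no chord linked to it). Such a chord $c_k$ cannot be a member of any linked pair, so it contributes nothing to either $J_{\rm in}(D)$ or $J_{\rm out}(D)$; moreover its presence does not alter the curves $\alpha_i$ or the types $I_a(D)$ of the remaining crossings, so neither sum changes. Next I would handle the second Reidemeister move, which adds or removes a pair of chords $c_k, c_\ell$ of opposite sign whose feet are adjacent on the line. The key points here are that $c_k$ and $c_\ell$ are not linked to each other, that any other chord $c_i$ is linked to $c_k$ if and only if it is linked to $c_\ell$ (with the same inward/outward pattern), and that the associated intersection numbers satisfy $\alpha_i\cdot\alpha_k = \alpha_i\cdot\alpha_\ell$ because the two new curves are homologous. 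Since $\e_k = -\e_\ell$, the contributions of the pairs $(i,k)$ and $(i,\ell)$ cancel in each of $V_1$ and $V_2$, so both polynomials are unchanged.

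\smallskip

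The main obstacle will be the third Reidemeister move, where three chords $c_k, c_\ell, c_m$ have their feet permuted within a local region while their linking relations with all chords outside the region are unaffected. For the external chords, the types, signs, and the homology classes $\alpha_i$ are preserved, so their pairwise contributions do not change; the delicate part is the bookkeeping for pairs among $\{c_k, c_\ell, c_m\}$ and for pairs in which exactly one foot lies in the region. I would proceed by enumerating the relevant configurations of arrow directions and signs, tracking how the linking type (inward versus outward) and the type $I_a$ of each of the three chords change across the move, and verifying that the intersection numbers $\alpha_i\cdot\alpha_j$ of the affected curves are preserved up to the reindexing induced by the move. The expectation is that the net change in $\sum \e_i\e_j t^{\alpha_i\cdot\alpha_j}$ over $J_{\rm in}$ (and separately over $J_{\rm out}$) vanishes, mirroring the degree-two invariance of $v_{2,1}$ and $v_{2,2}$ established in \cite{GPV} but now keeping track of the exponent $\alpha_i\cdot\alpha_j$ rather than collapsing it to a constant. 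Because the exponent is itself a move-invariant pairing, the cancellations that make $v_{2,1}, v_{2,2}$ invariant should lift verbatim to the $t$-weighted sums, and the proof reduces to confirming that the weights attached to cancelling terms agree.
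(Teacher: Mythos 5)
Your overall strategy---checking invariance under Reidemeister moves I--III on Gauss diagrams---is the same as the paper's, and your treatment of moves I and II essentially reproduces the paper's argument. One minor error in move II: your claim that the two new chords are not linked to each other is false for the coherently oriented version of the move (their endpoints alternate along the line). The actual reason the pair $(c_k,c_\ell)$ contributes nothing is that the two crossings have the same type (both in $I_0$ or both in $I_1$), whereas membership in $J_{\rm in}(D)$ or $J_{\rm out}(D)$ requires opposite types. That same-type fact is also needed to justify your cancellation of $(i,k)$ against $(i,\ell)$, since those pairs must belong to $J_{\rm in}$ (resp.\ $J_{\rm out}$) simultaneously; you never discuss types at all.

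The serious gap is move III, which is the only nontrivial case and which you leave as an ``expectation.'' The cancellation that proves invariance of $v_{2,1}$ does \emph{not} lift verbatim: there are configurations (the paper's cases (v) and (vi) in Figure~\ref{RIIIgauss}) in which $J_{\rm in}(D)\neq J_{\rm in}(D')$, a pair such as $(1,3)$ on one side of the move being replaced by the pair $(2,3)$ on the other side. These two pairs involve \emph{different} curves, so their weights $t^{\alpha_1\cdot\alpha_3}$ and $t^{\alpha_2\cdot\alpha_3}$ are not equal by any reindexing or by move-invariance of the intersection pairing; what is needed is the homological relation $\alpha_1=\alpha_2+\alpha_3$ coming from the smoothings at the three crossings of the triangle, which gives $\alpha_1\cdot\alpha_3=(\alpha_2+\alpha_3)\cdot\alpha_3=\alpha_2\cdot\alpha_3$ because $\alpha_3\cdot\alpha_3=0$. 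Combined with the sign relation $\e_1=-\e_2=\e_3$, this is exactly what makes the two monomials cancel. Identifying and proving this relation is the heart of the theorem---it is what allows the constant-weight cancellation behind $v_{2,1},v_{2,2}$ to survive with the exponents attached---so your proposal as written does not constitute a proof. (Incidentally, the paper also streamlines the case analysis by using move II to reduce to the two coherently oriented versions of move III, which would shorten the enumeration you propose.)
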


\begin{proof}
It suffices to show that 
$V_1(D;t)=V_1(D';t)$ 
whenever two diagrams $D$ and $D'$ representing $K$ 
are related by a Reidemeister move I, II, or III. 
The proof for $V_2(D;t)$ is completely analogous. 

(I) Assume that $D'$ is obtained from $D$ 
by a Reidemeister move I, 
and let $c_1$ be the crossing of $D$ removed by this move. 
Since the two endpoints of the corresponding chord $c_1$ are adjacent 
on the underlying line, 
$c_1$ is not inwardly linked with any other chord. 
Therefore, $J_{\rm in}(D)=J_{\rm in}(D')$, 
and hence $V_1(D;t)=V_1(D';t)$.

(II) Assume that $D'$ is obtained from $D$ 
by a Reidemeister move II, 
removing the crossings $c_1$ and $c_2$ of $D$. 
These crossings have the same type. 
For any $i\in\{3,\dots,n\}$, 
the corresponding chord $c_1$ is inwardly linked with 
$c_i$ if and only if $c_2$ is. 
Since $\e_1=-\e_2$ and $\alpha_1=\alpha_2$, 
their contributions to $V_{1}(D;t)$ 
cancel pairwise. 
More precisely, we have 
\[\begin{cases}
\e_1\e_it^{\alpha_1\cdot\alpha_i}+
\e_2\e_i t^{\alpha_2\cdot\alpha_i}=0 
& \text{if $1,2\in I_0(D)$},\\
\e_i\e_1t^{\alpha_i\cdot\alpha_1}+
\e_i\e_2 t^{\alpha_i\cdot\alpha_2}=0 
& \text{if $1,2\in I_1(D)$}.
\end{cases}\]
Thus $V_1(D;t)=V_1(D';t)$. 

(III) Assume that $D'$ is obtained from $D$ 
by a Reidemeister move III, 
involving the crossings $c_1$, $c_2$, and $c_3$ of $D$. 
By case (II), it suffices to consider two cases 
shown in Figure~\ref{RIIIdiagram}, 
where the three arcs around the triangular region 
are oriented coherently. 
For each $i\in\{1,\dots,n\}$, 
the homology class $\alpha_i$ defined at $c_i$ in $D$ 
coincides with that defined at $c_i$ in $D'$. 

\begin{figure}[]
  \centering
    \begin{overpic}[]{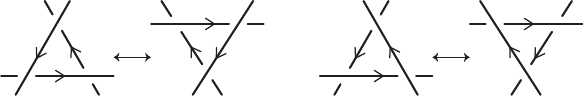}
      \put(58,22.5){III}
      \put(211.5,22.5){III}
    \end{overpic}
  \caption{Two Reidemeister moves III}
  \label{RIIIdiagram}
\end{figure}

There are six corresponding descriptions (i)--(vi) 
of these Reidemeister moves on Gauss diagrams, 
as shown in Figure~\ref{RIIIgauss}. 
In cases (i)--(iv), 
we have $J_{\rm in}(D)=J_{\rm in}(D')$, 
and hence $V_1(D;t)=V_1(D';t)$. 
In the remaining cases (v) and (vi), 
we have $\alpha_1=\alpha_2+\alpha_3$ 
and $\e_1=-\e_2=\e_3=\e$. 
Since 
$\alpha_1\cdot\alpha_3=(\alpha_2+\alpha_3)\cdot\alpha_3
=\alpha_2\cdot\alpha_3$, 
it follows that 
\[V_1(D;t)-V_1(D';t)=
\begin{cases}
\e_1\e_3t^{\alpha_1\cdot\alpha_3}+\e_2\e_3t^{\alpha_2\cdot\alpha_3}=0 
& \text{for case (v),} \\
\e_3\e_1t^{\alpha_3\cdot\alpha_1}+\e_3\e_2t^{\alpha_3\cdot\alpha_2}=0
& \text{for case (vi)}. 
\end{cases}\]
\end{proof}

\begin{figure}[htbp]
  \centering
  \vspace{1em}
    \begin{overpic}[]{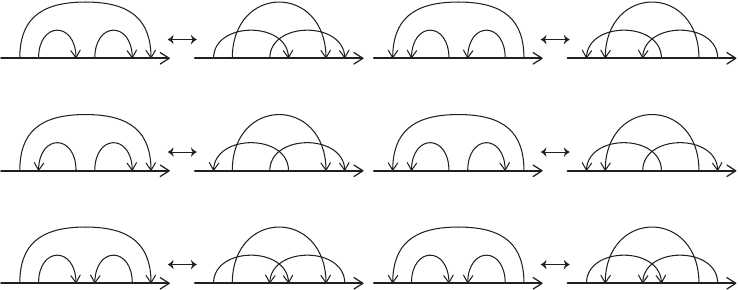}
      \put(82.75,126){(i)}
      \put(261,126){(ii)}
      \put(80,72.5){(iii)}
      \put(259.5,72.5){(iv)}
      \put(81.5,18){(v)}
      \put(259.5,18){(vi)}
      \put(39.5,142){$\e$}
      \put(21,127.5){$-\e$}
      \put(48,127.5){$-\e$}
      \put(133,142){$\e$}
      \put(100,125.5){$-\e$}
      \put(156,125.5){$-\e$}
      \put(219,142){$\e$}
      \put(201,127.5){$-\e$}
      \put(228,127.5){$-\e$}
      \put(312.5,142){$\e$}
      \put(279,125.5){$-\e$}
      \put(335,125.5){$-\e$}

      \put(39.5,87){$\e$}
      \put(25.5,73){$\e$}
      \put(48,73){$-\e$}
      \put(133,87){$\e$}
      \put(107,71){$\e$}
      \put(156,71){$-\e$}
      \put(219,87){$\e$}
      \put(201,73){$-\e$}
      \put(232.5,73){$\e$}
      \put(312.5,87){$\e$}
      \put(279,71){$-\e$}
      \put(339,71){$\e$}

      \put(39.5,33){$\e$}
      \put(21,19){$-\e$}
      \put(52.5,19){$\e$}
      \put(23.5,33.5){$c_{1}$}
      \put(23.5,8){$c_{2}$}
      \put(51,8){$c_{3}$}
      \put(133,33){$\e$}
      \put(100,17){$-\e$}
      \put(158,17){$\e$}
      \put(118,33.5){$c_{1}$}
      \put(117,8){$c_{2}$}
      \put(144,8){$c_{3}$}
      \put(219,33){$\e$}
      \put(201,19){$-\e$}
      \put(232.5,19){$\e$}
      \put(204,33.5){$c_{1}$}
      \put(231,8){$c_{2}$}
      \put(203.5,8){$c_{3}$}
      \put(312.5,33){$\e$}
      \put(279,17){$-\e$}
      \put(339,17){$\e$}
      \put(298,33.5){$c_{1}$}
      \put(324,8){$c_{2}$}
      \put(297,8){$c_{3}$}
    \end{overpic}
  \caption{Six descriptions on Gauss diagrams}
  \label{RIIIgauss}
\end{figure}

\begin{definition}
The \emph{$V_1$-polynomial} and the \emph{$V_2$-polynomial} 
of a long virtual knot $K$ are defined by 
\[
V_{1}(K;t)=V_{1}(D;t) \text{ and }
V_{2}(K;t)=V_{2}(D;t),
\]
where $D$ is any diagram representing $K$. 
\end{definition}

\begin{remark}
(i) By definition,  
\[V_1(K;1)=v_{2,1}(K)\text{ and }
V_2(K;1)=v_{2,2}(K).\]
Thus, the $V_1$- and $V_2$-polynomials 
can be regarded as polynomial generalizations of 
the $v_{2,1}$- and $v_{2,2}$-invariants, 
respectively. 

(ii) If $K$ is a long classical knot, 
then it admits a surface realization on $S^{2}$. 
Since any intersection number vanishes 
and $v_{2,1}(K)=v_{2,2}(K)=a_2(K)$, 
we have 
\[V_1(K;t)=V_2(K;t)=a_2(K),\]
where $a_2(K)$ is the second coefficient 
of the Conway polynomial of $K$. 
\end{remark}

For each chord $c_i$ of a Gauss diagram $D$ with sign $\e_i$, 
we assign signs to its endpoints so that 
the terminal endpoint has sign $\e_i$
and the initial endpoint has sign $-\e_i$. 
Let $\ell_i$ and $r_i$ denote the left and right endpoints of $c_i$, respectively, 
as shown in Figure~\ref{fig:endpoint-sign}. 

\begin{figure}[htbp]
  \centering
  \vspace{0.5em}
    \begin{overpic}[]{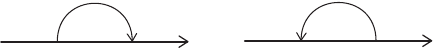}
      \put(42,26){$c_{i}$}
      \put(42,14){$\e_{i}$}
      \put(9,7){$-\e_{i}$}
      \put(69,7){$\e_{i}$}
      \put(24,-8){$\ell_{i}$}
      \put(60,-8){$r_{i}$}
      \put(159,26){$c_{i}$}
      \put(159,14){$\e_{i}$}
      \put(124,7){$-\e_{i}$}
      \put(185,7){$\e_{i}$}
      \put(141,-8){$\ell_{i}$}
      \put(178,-8){$r_{i}$}
    \end{overpic}
  \vspace{0.75em}
  \caption{The signs of the endpoints of a chord}
  \label{fig:endpoint-sign}
\end{figure}

Assume that $c_i$ and $c_j$ are 
inwardly or outwardly linked chords, 
and that their endpoints are arranged on the underlying line
in the order $\ell_i, \ell_j, r_i, r_j$. 
Let $c_{k}\neq c_{i},c_{j}$ be a chord 
whose endpoints satisfy one of the conditions (i)--(iii) below, 
as shown in Figure~\ref{fig:int-calculation}. 
\begin{enumerate}
\item
$\ell_k$ lies on the interval $\overline{\ell_i\ell_j}$ 
and $r_k$ lies on $\overline{\ell_jr_i}$. 
\item
$\ell_k$ lies on the interval $\overline{\ell_j r_i}$ 
and $r_k$ lies on $\overline{r_ir_j}$. 
\item
$\ell_k$ lies on the interval $\overline{\ell_i\ell_j}$ 
and $r_k$ lies on $\overline{r_ir_j}$.  
\end{enumerate}

\begin{figure}[htbp]
  \centering
  \vspace{0.5em}
    \begin{overpic}[]{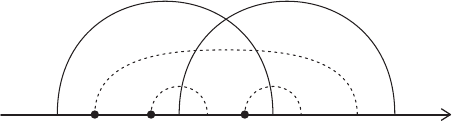}
      \put(75.5,63){$c_{i}$}
      \put(24,-8){$\ell_{i}$}
      \put(127,-8){$r_{i}$}
      \put(134,63){$c_{j}$}
      \put(83,-8){$\ell_{j}$}
      \put(186,-8){$r_{j}$}
      \put(62,12){(i)}
      \put(60.3,36){(iii)}
      \put(145,12){(ii)}
    \end{overpic}
  \vspace{0.75em}
  \caption{Three conditions (i), (ii), and (iii)}
  \label{fig:int-calculation}
\end{figure}

Let $S_{ij}$ denote the sum of signs 
assigned to $\ell_k$ for all chords $c_k$ 
satisfying one of the conditions (i), (ii), or (iii) above. 
Then the intersection number $\alpha_i\cdot\alpha_j$ 
is given by 
\[\alpha_i\cdot\alpha_j=
\begin{cases}
S_{ij} & \text{if the signs of $\ell_i$ and $\ell_j$ are opposite}, \\
S_{ij}+1 & \text{if the signs of $\ell_i$ and $\ell_j$ are both positive}, \\
S_{ij}-1 & \text{if the signs of $\ell_i$ and $\ell_j$ are both negative}.
\end{cases}\]
See \cite[Lemma 3.2]{HNNS1} for further details.

\begin{example}
Let $K$ be the long virtual knot 
represented by the Gauss diagram $D$ 
with four chords $c_{1},\dots,c_{4}$ 
shown in Figure~\ref{fig:example}. 
One readily verifies that 
\[J_{\rm in}(D)=\{(1,2), (1,4), (3,4)\}\text{ and }
J_{\rm out}(D)=\{(2,3)\}.\] 
Consider the pair $(1,4)\in J_{\rm in}(D)$. 
For the inwardly linked chords $c_1$ and $c_4$, 
the chords $c_2$ and $c_3$ satisfy the condition (iii), 
and the signs of their left endpoints $\ell_2$ and $\ell_3$ are both positive. 
Hence, we have $S_{14}=1+1=2$. 
Moreover, since the signs of $\ell_1$ and $\ell_4$ are both positive, 
it follows that $\alpha_1\cdot\alpha_4=2+1=3$. 
Using $\e_1=-1$ and $\e_4=+1$, 
we have $\e_1\e_4t^{\alpha_1\cdot\alpha_4}=-t^3$. 
A similar computation shows that 
$\e_1\e_2t^{\alpha_1\cdot\alpha_2}=\e_3\e_4t^{\alpha_3\cdot\alpha_4}=-t$. 
Therefore, $V_1(K;t)=-t^3-2t$. 
In the same manner, we obtain $V_2(K;t)=\e_2\e_3t^{\alpha_2\cdot\alpha_3}=-t$. 
\end{example}

\begin{figure}[htb]
  \centering
  \vspace{0.75em}
    \begin{overpic}[]{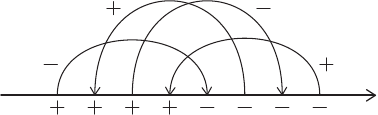}
      \put(31,33){$c_{1}$}
      \put(72,59){$c_{2}$}
      \put(101,59){$c_{3}$}
      \put(143,33){$c_{4}$}
    \end{overpic}
  \caption{A Gauss diagram with four chords $c_{1},\dots,c_{4}$}
  \label{fig:example}
\end{figure}


\section{Fundamental properties}\label{sec4} 

In this section, we establish fundamental properties of the
polynomials $V_1(K;t)$ and $V_2(K;t)$.
We first analyze their behavior under symmetries of long virtual knots 
and their additivity under the product operation.
We then show that these invariants realize 
any pair of Laurent polynomials.

Let $D=(\Sigma,D)$ be a surface realization 
of a long virtual knot $K$. 
Denote by $-D$, $D^*$, and $D^{\#}$ 
the diagrams obtained from $D$ 
by reversing the orientation of $D$, 
by reversing the orientation of $\Sigma$, 
and by switching the crossing information at each crossing, 
respectively. 
We further denote by 
$-K$, $K^*$, and $K^{\#}$ 
the long virtual knots represented by 
$-D$, $D^*$, and $D^{\#}$, respectively. 
When $D$ is regarded as a Gauss diagram, 
the diagram $-D$ is obtained from $D$ 
by reversing the orientation of the underlying line, 
$D^*$ is obtained by changing the signs of all chords, 
and $D^\#$ is obtained by changing both the signs and the orientations 
of all chords. 

\begin{lemma}\label{lem41}
For a long virtual knot $K$, 
the following hold. 
\begin{enumerate}
\item
$V_1(-K;t)=V_1(K;t^{-1})$ and $V_2(-K;t)=V_2(K;t^{-1})$. 
\item
$V_1(K^*;t)=V_1(K;t^{-1})$ and $V_2(K^*;t)=V_2(K;t^{-1})$. 
\item
$V_1(K^{\#};t)=V_2(K;t)$ and $V_2(K^{\#};t)=V_1(K;t)$. 
\end{enumerate}
\end{lemma}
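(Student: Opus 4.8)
The three identities should follow directly from the definitions of $V_1(D;t)$ and $V_2(D;t)$ as Gauss-diagram sums, by tracking how each of the three operations acts on the three ingredients of a summand: the linking type (inward vs.\ outward), the sign product $\e_i\e_j$, and the intersection number $\alpha_i\cdot\alpha_j$. The strategy in each part is to exhibit a bijection between the index sets $J_{\rm in}$ and $J_{\rm out}$ of the two diagrams and to compare the corresponding terms. I would fix a diagram $D$ representing $K$ and work on the Gauss-diagram side, using the recipe for $\alpha_i\cdot\alpha_j$ in terms of the endpoint signs and the quantity $S_{ij}$ recalled just before the lemma.

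\smallskip
For part (i), reversing the orientation of the underlying line keeps each chord (and its sign) but reverses the left-right order of all endpoints, so an inwardly linked pair stays inwardly linked and $J_{\rm in}$ is preserved setwise. The key computation is that reversing the line sends $\alpha_i\cdot\alpha_j$ to $-(\alpha_i\cdot\alpha_j)$: intuitively, reversing the ambient orientation of the curve reverses the sign of every intersection number, so every monomial $t^{\alpha_i\cdot\alpha_j}$ becomes $t^{-\alpha_i\cdot\alpha_j}$ while $\e_i\e_j$ is unchanged. This yields $V_1(-K;t)=V_1(K;t^{-1})$, and identically for $V_2$. For part (ii), changing the signs of all chords ($\Sigma\mapsto -\Sigma$) leaves the linking data and the underlying line untouched, so $J_{\rm in}$ and $J_{\rm out}$ are unchanged and each $\e_i\e_j$ is preserved (two sign flips cancel). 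The intersection number, however, is computed on the orientation-reversed surface, which flips its sign; hence again $t^{\alpha_i\cdot\alpha_j}\mapsto t^{-\alpha_i\cdot\alpha_j}$, giving the stated substitution $t\mapsto t^{-1}$.

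\smallskip
For part (iii), switching all crossings flips both the signs and the orientations of all chords. Flipping chord orientations interchanges initial and terminal endpoints, which turns an inwardly linked configuration into an outwardly linked one and vice versa; combined with the type bookkeeping (an over-first crossing becomes an under-first crossing, swapping $I_0$ and $I_1$), this should give a sign-preserving bijection $J_{\rm in}(D)\leftrightarrow J_{\rm out}(D^{\#})$. The product $\e_i\e_j$ is preserved since both signs flip. The remaining point is that the intersection number is unchanged, $\alpha_i\cdot\alpha_j(D^{\#})=\alpha_i\cdot\alpha_j(D)$: switching a crossing does not alter the ambient surface or the curves $\alpha_i$ as unoriented homology classes, and the simultaneous reversal of chord orientations does not affect the intersection pairing entry-by-entry. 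Granting this, every inward term of $V_1(K;t)$ matches an outward term of $V_2(K^{\#};t)$ with the same exponent and coefficient, yielding $V_1(K^{\#};t)=V_2(K;t)$ and symmetrically the other equality.

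\smallskip
\textbf{Main obstacle.} The delicate step is verifying the behavior of $\alpha_i\cdot\alpha_j$ under each operation at the combinatorial level, rather than invoking an intuitive ``orientation reversal flips intersection numbers'' slogan. I would make this rigorous by re-running the endpoint-sign recipe for $\alpha_i\cdot\alpha_j$ (the $S_{ij}$-formula) under each transformation: for (i) and (ii) one must check that the case analysis in that formula transforms so as to negate the result, and for (iii) that the simultaneous flip of signs and orientations leaves $S_{ij}$ and the $\pm1$ correction term invariant. Care is especially needed with the $\pm1$ correction governed by the signs of $\ell_i$ and $\ell_j$, and with the precise correspondence between the $\ell_i,\ell_j,r_i,r_j$ ordering before and after reversing the line; checking these bookkeeping details carefully is where the real work lies.
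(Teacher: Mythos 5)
Your overall strategy---compare the two sums term by term via a bijection of index sets, tracking linkedness, signs, and intersection numbers---is exactly the paper's, and your treatments of (ii) and (iii) essentially reproduce the paper's proof. (One small imprecision in (iii): you need the cycles $\alpha_i$ to be unchanged as \emph{oriented} classes, which they are, since the smoothing does not depend on over/under information; ``unchanged as unoriented classes'' would not suffice, because the sign of the intersection pairing depends on orientations.)

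However, both key claims in your part (i) are false, and your argument for (i) reaches the correct identity only because the two errors cancel. First, $J_{\rm in}$ is \emph{not} preserved setwise: reversing the line flips the type of every crossing (the pass met first in $D$ is met last in $-D$), so although an inwardly linked pair of chords remains inwardly linked, the ordered pair $(i,j)$ with $i\in I_0(D)$, $j\in I_1(D)$ appears in $-D$ as $(j,i)$; that is, $J_{\rm in}(-D)$ is the \emph{transpose} of $J_{\rm in}(D)$, which is what the paper proves. Second, reversing the orientation of the knot does not negate intersection numbers: it reverses \emph{both} cycles, and $(-\alpha_i)\cdot(-\alpha_j)=\alpha_i\cdot\alpha_j$, so the pairing at a fixed ordered pair of crossings is unchanged. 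The true source of the substitution $t\mapsto t^{-1}$ is the combination of the transposition above with the \emph{antisymmetry} of the intersection form, $\alpha_i\cdot\alpha_j=-\alpha_j\cdot\alpha_i$, a step your argument never invokes. Your final formula survives only because $\e_i\e_j$ is symmetric in $i$ and $j$, so ``transpose the index set and keep exponents'' yields the same sum as ``keep the index set and negate exponents.'' Note also that the repair you propose in your last paragraph---re-running the endpoint-sign recipe expecting to confirm your two claims---would in fact refute both of them; the correct fix is the transposition-plus-antisymmetry argument just described, which is precisely the paper's proof.
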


\begin{proof}
The proof follows the same strategy 
as that of \cite[Theorem~5.2]{NNSW}. 

(i) For any $1\leq i,j\leq n$, we have $(i,j)\in J_{\rm in}(-D)$ 
if and only if $(j,i)\in J_{\rm in}(D)$. 
Moreover, the cycles at $c_i$ and $c_j$ in $-D$ 
are $-\alpha_i$ and $-\alpha_j$, respectively, 
while the signs of $c_i$ and $c_j$ remain unchanged. 
Therefore, 
\[V_1(-D;t)=\sum_{(i,j)\in J_{\rm in}(-D)}
\e_i\e_j t^{(-\alpha_i)\cdot(-\alpha_j)}
=\sum_{(j,i)\in J_{\rm in}(D)}
\e_j\e_i t^{-\alpha_j\cdot\alpha_i}
=V_1(D;t^{-1}).\]
The equality $V_2(-K;t)=V_2(K;t^{-1})$ is proved similarly. 

(ii) 
For any $1\leq i,j\leq n$, we have $(i,j)\in J_{\rm in}(D^*)$ 
if and only if $(i,j)\in J_{\rm in}(D)$. 
The intersection number between the cycles at $c_i$ and $c_j$ in $D^*$ 
is $-\alpha_i\cdot\alpha_j$, 
and the signs of $c_i$ and $c_j$ are $-\e_i$ and $-\e_j$, respectively. 
Hence, 
\[V_1(D^*;t)=\sum_{(i,j)\in J_{\rm in}(D^*)}
(-\e_i)(-\e_j) t^{-\alpha_i\cdot\alpha_j}
=\sum_{(i,j)\in J_{\rm in}(D)}
\e_i\e_j t^{-\alpha_i\cdot\alpha_j}
=V_1(D;t^{-1}).\]
The equality $V_2(K^*;t)=V_2(K;t^{-1})$ 
follows in the same manner. 

(iii) 
For any $1\leq i,j\leq n$, we have $(i,j)\in J_{\rm in}(D^{\#})$ 
if and only if $(i,j)\in J_{\rm out}(D)$. 
The cycles at $c_i$ and $c_j$ in $D^{\#}$ 
are $\alpha_i$ and $\alpha_j$, respectively, 
and the signs of $c_i$ and $c_j$ are $-\e_i$ and $-\e_j$. 
Thus, 
\[V_1(D^{\#};t)=\sum_{(i,j)\in J_{\rm in}(D^{\#})}
(-\e_i)(-\e_j) t^{\alpha_i\cdot\alpha_j}
=\sum_{(i,j)\in J_{\rm out}(D)}
\e_i\e_j t^{\alpha_i\cdot\alpha_j}
=V_2(D;t).\]
The equality $V_2(K^{\#};t)=V_1(K;t)$ follows similarly. 
\end{proof}

Let $D$ and $D'$ be diagrams representing 
long virtual knots $K$ and $K'$, respectively. 
Let $D\circ D'$ denote the diagram obtained by 
connecting $D$ and $D'$ in this order. 
The long virtual knot represented by $D\circ D'$ 
is called the \emph{product} of $K$ and $K'$, 
and is denoted by $K\circ K'$. 

\begin{lemma}\label{lem42}
For $s=1,2$, we have
$V_s(K\circ K';t)=V_s(K;t)+V_s(K';t)$. 
\end{lemma}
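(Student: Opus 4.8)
The plan is to prove additivity by analyzing how the Gauss diagram of $D \circ D'$ decomposes relative to those of $D$ and $D'$. I would work entirely on the Gauss diagram level, since the quantities $J_{\mathrm{in}}$, the signs $\e_i$, and the intersection numbers $\alpha_i \cdot \alpha_j$ all admit combinatorial descriptions there. The underlying line of $D \circ D'$ is the concatenation of the line of $D$ followed by that of $D'$, so every chord of $D \circ D'$ is either a chord coming from $D$ or a chord coming from $D'$, and no chord has one endpoint in the $D$-part and the other in the $D'$-part. This is the structural fact that drives the whole argument.

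First I would observe that the types are preserved: a chord inherited from $D$ has the same type (0 or 1) in $D \circ D'$ as it did in $D$, because traveling from the basepoint one encounters the $D$-portion exactly as before, and similarly for chords from $D'$. Next, two chords can be inwardly linked in $D \circ D'$ only if their four endpoints interleave along the line; since the $D$-endpoints all precede the $D'$-endpoints, a chord from $D$ and a chord from $D'$ can never be linked. Hence
\[
J_{\mathrm{in}}(D \circ D') = J_{\mathrm{in}}(D) \sqcup J_{\mathrm{in}}(D'),
\]
a disjoint union, with the signs $\e_i \e_j$ agreeing on each piece.

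The key remaining point is that the intersection numbers are also preserved. For a linked pair $(i,j)$ with both chords coming from $D$, I would verify that $\alpha_i \cdot \alpha_j$ computed in $D \circ D'$ equals the value computed in $D$. Using the combinatorial formula recalled before the Example — namely $\alpha_i \cdot \alpha_j = S_{ij} \pm 1$ or $S_{ij}$ depending on the signs of $\ell_i, \ell_j$, where $S_{ij}$ counts signed endpoints of third chords $c_k$ straddling the configuration according to conditions (i)--(iii) — the point is that any chord $c_k$ contributing to $S_{ij}$ must have both endpoints inside the span $\overline{\ell_i r_j}$, which lies entirely in the $D$-portion. Therefore no chord from $D'$ can contribute to $S_{ij}$, and the correction term $\pm 1$ or $0$ depends only on the signs of $\ell_i$ and $\ell_j$, which are unchanged. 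Thus $\alpha_i \cdot \alpha_j$ is the same whether computed in $D$ or in $D \circ D'$, and symmetrically for pairs from $D'$. Summing the monomials $\e_i \e_j t^{\alpha_i \cdot \alpha_j}$ over the disjoint union then gives $V_1(D \circ D'; t) = V_1(D; t) + V_1(D'; t)$, and the identical argument with $J_{\mathrm{out}}$ handles $V_2$.

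The main obstacle I anticipate is the careful bookkeeping in the intersection-number step: one must confirm that the straddling conditions (i)--(iii) genuinely confine all relevant third chords to the same side of the concatenation, and that the $\pm 1$ adjustment is intrinsic to the pair $(i,j)$ rather than sensitive to the ambient diagram. Everything else — type preservation and the non-linking of cross-part chords — is an immediate consequence of the endpoint ordering, so the proof is essentially a verification that concatenation induces a partition respecting all three ingredients in the definition of $V_s$.
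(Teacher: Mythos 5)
Your proposal is correct and follows essentially the same route as the paper: both rest on the decomposition $J_{\mathrm{in}}(D\circ D')=J_{\mathrm{in}}(D)\sqcup J_{\mathrm{in}}(D')$ (and likewise for $J_{\mathrm{out}}$) together with the fact that the intersection numbers $\alpha_i\cdot\alpha_j$ are unchanged under concatenation. The only difference is one of detail: the paper simply asserts the preservation of intersection numbers, while you verify it via the combinatorial formula $\alpha_i\cdot\alpha_j=S_{ij},\,S_{ij}\pm1$, correctly observing that conditions (i)--(iii) confine all contributing third chords to the span $\overline{\ell_i r_j}$, hence to a single factor of the concatenation.
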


\begin{proof}
By the definition of the product, 
\[J_{\rm in}(D\circ D')=J_{\rm in}(D)\sqcup J_{\rm in}(D') \text{ and }
J_{\rm out}(D\circ D')=J_{\rm out}(D)\sqcup J_{\rm out}(D').\]
Moreover, 
for any $(i,j)\in J_{\rm in}(D\circ D')\sqcup J_{\rm out}(D\circ D')$, 
the intersection number of the cycles at $c_i$ and $c_j$ 
in $D\circ D'$ coincides with that in $D$ or $D'$. 
The claim follows immediately. 
\end{proof}

The following theorem implies that 
the $V_1$- and $V_2$-polynomials are 
independent of each other. 

\begin{theorem}\label{thm43}
For any Laurent polynomials $f(t),g(t)\in{\Z}[t,t^{-1}]$, 
there exists a long virtual knot $K$ such that 
$V_1(K;t)=f(t)$ and $V_2(K;t)=g(t)$. 
\end{theorem}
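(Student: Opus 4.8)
The plan is to exploit the additivity of Lemma~\ref{lem42} to reduce the problem to realizing a single monomial in each slot, and then to patch finitely many building blocks together by the product operation. Write $f(t)=\sum_{k}a_k t^k$ and $g(t)=\sum_{k}b_k t^k$ as finite sums with $a_k,b_k\in\Z$. Since Lemma~\ref{lem42} gives $V_s(K\circ K';t)=V_s(K;t)+V_s(K';t)$ for $s=1,2$, it suffices to produce, for each integer $k$ and each sign of the coefficient, a \emph{single} long virtual knot whose $V_1$-polynomial is $\pm t^k$ while its $V_2$-polynomial vanishes (and, symmetrically, a knot with $V_2$-polynomial $\pm t^k$ and $V_1$-polynomial $0$). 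Taking the product of $|a_k|$ copies of the appropriate $V_1$-block for each $k$, together with $|b_k|$ copies of the appropriate $V_2$-block, then yields a long virtual knot $K$ with $V_1(K;t)=f(t)$ and $V_2(K;t)=g(t)$ by additivity.

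The first step is therefore to construct a \emph{basic $V_1$-block}: a Gauss diagram $D_k$ on which exactly one pair of chords is inwardly linked (with the right type pattern $i\in I_0$, $j\in I_1$) while no pair is outwardly linked, and such that the intersection number of that single pair equals a prescribed integer $k$. The worked Example in Section~\ref{sec3} is the key guide here: a pair of inwardly linked chords $c_i,c_j$ with signs arranged so that $\e_i\e_j=\pm1$ contributes exactly $\pm t^{\alpha_i\cdot\alpha_j}$ to $V_1$, and by inserting auxiliary chords whose endpoints satisfy condition (iii) of the intersection-number formula one can tune $S_{ij}$, hence $\alpha_i\cdot\alpha_j$, to equal any desired integer $k$. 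Concretely I would take one inwardly linked pair $\ell_i,\ell_j,r_i,r_j$ and then add $k$ (or $|k|$, with signs chosen for negative $k$) "straddling" chords in the configuration (iii) of Figure~\ref{fig:int-calculation}, choosing their signs so that the accumulated $S_{ij}$ realizes $k$. The sign $\e_i\e_j$ of the distinguished pair is then set to $+1$ or $-1$ to obtain $+t^k$ or $-t^k$. One must verify that the auxiliary chords do not themselves create any inwardly or outwardly linked pair contributing to $V_1$ or $V_2$; this is arranged by making each auxiliary chord \emph{nested} inside a region where it is linked with neither $c_i$ nor $c_j$ in a type-compatible way, or by pairing auxiliary chords so their own contributions cancel as in the Reidemeister~II argument of Theorem~\ref{thm31}.

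The \emph{basic $V_2$-block} is obtained from the $V_1$-block essentially for free, using part~(iii) of Lemma~\ref{lem41}: since $V_1(K^{\#};t)=V_2(K;t)$ and $V_2(K^{\#};t)=V_1(K;t)$, applying the $\#$-operation (switching all crossings) to a $V_1$-block realizing $\pm t^k$ in $V_1$ and $0$ in $V_2$ yields a knot realizing $\pm t^k$ in $V_2$ and $0$ in $V_1$. This symmetry halves the construction work. Finally, the constant-term cases ($k=0$) and the interaction of the two families deserve a remark: because each block is chosen to have vanishing polynomial in the \emph{other} slot, the products never cross-contaminate, so the two prescribed polynomials $f$ and $g$ are realized simultaneously and independently.

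The main obstacle I anticipate is the \textbf{verification that the auxiliary chords introduced to set the intersection number $k$ do not inadvertently contribute to $V_1$ or $V_2$}, either through unwanted inward/outward linking among themselves or with the distinguished pair, and do not alter the \emph{types} (membership in $I_0$ versus $I_1$) of $c_i$ and $c_j$ in a way that destroys the $J_{\rm in}$ or $J_{\rm out}$ condition. Controlling the global type assignment is delicate because type depends on the order in which over/undercrossings are met when traversing from the basepoint, so inserting chords changes the traversal. I expect the cleanest route is to place all auxiliary chords as cancelling pairs (so their net contribution to either polynomial is zero by the Reidemeister~II cancellation mechanism), while still contributing coherently to $S_{ij}$ for the distinguished pair; confirming that such a configuration exists and leaves the types of $c_i,c_j$ intact is where the real care lies.
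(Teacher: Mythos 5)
Your proposal takes essentially the same route as the paper's proof: reduce to monomial blocks via the additivity of Lemma~\ref{lem42}, realize $\pm t^{k}$ in $V_1$ (with $V_2=0$) by a Gauss diagram with a single inwardly linked pair whose intersection number is tuned by auxiliary chords, and obtain the $V_2$-blocks from these via Lemma~\ref{lem41}(iii). The verification you flag as the main obstacle is settled in the paper not by cancelling pairs but by exhibiting explicit diagrams $D(n)$ and $D'(n)$ (Figure~\ref{fig:Dn}) whose parallel auxiliary chords create no type-compatible linked pairs, so that one checks directly $J_{\rm in}=\{(1,2)\}$, $J_{\rm out}=\emptyset$, and $\alpha_1\cdot\alpha_2=n$ resp.\ $\alpha_1'\cdot\alpha_2'=n-1$ (note that flipping the sign of the distinguished chord also shifts the intersection number by one), with Lemma~\ref{lem41}(i) supplying the negative exponents.
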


\begin{proof}
For each integer $n\geq 0$, 
consider the long virtual knots $K(n)$ and $K'(n)$ 
represented by the Gauss diagrams $D(n)$ and $D'(n)$, respectively, 
shown in Figure~\ref{fig:Dn}. 
Here, $c_{1},\dots,c_{n+2}$ (resp. $c'_{1},\dots,c'_{n+2}$) 
the chords of $D(n)$ (resp. $D'(n)$). 
The only difference between $D(n)$ and $D'(n)$ 
is that the chords $c_{2}$ and $c'_{2}$ have opposite signs. 

\begin{figure}[htbp]
  \centering
  \vspace{0.75em}
    \begin{overpic}[]{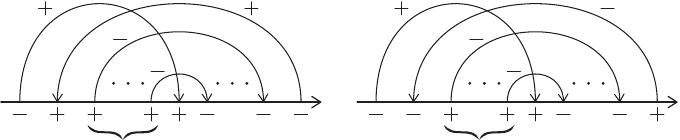}
      \put(-11,50){$D(n)$}
      \put(43,70){$c_{1}$}
      \put(88,70){$c_{2}$}
      \put(88,56.5){$c_{3}$}
      \put(88,36){$c_{n+2}$}
      \put(56.5,-7){$n$}
      \put(316,50){$D'(n)$}
      \put(213.5,71){$c'_{1}$}
      \put(260,71){$c'_{2}$}
      \put(260,56){$c'_{3}$}
      \put(260,37){$c'_{n+2}$}
      \put(228.5,-7){$n$}
    \end{overpic}
  \vspace{0.75em}
  \caption{The Gauss diagrams $D(n)$ and $D'(n)$}
  \label{fig:Dn}
\end{figure}

One readily verifies that 
\[J_{\rm in}(D(n))=J_{\rm in}(D'(n))=\{(1,2)\}
\text{ and }J_{\rm out}(D(n))=J_{\rm out}(D'(n))=\emptyset.\]
Since $\alpha_1\cdot\alpha_2=n$ and $\alpha_1'\cdot\alpha_2'=n-1$, 
we obtain 
\[V_1(K(n);t)=t^n, \ V_1(K'(n);t)=-t^{n-1}, \text{ and }
V_2(K(n);t)=V_2(K'(n);t)=0.\] 
By Lemmas~\ref{lem41}(i) and \ref{lem42}, 
there exists a long virtual knot $K$ 
such that \[V_1(K;t)=f(t)\text{ and }V_2(K;t)=0.\]
Moreover, by Lemma~\ref{lem41}(iii), 
there exists a long virtual knot $K'$ 
such that \[V_1(K';t)=0\text{ and }V_2(K';t)=g(t).\]
Therefore, the product $K\circ K'$ is a long virtual knot 
satisfying that 
\[
V_1(K\circ K';t)=f(t) \text{ and } V_2(K\circ K';t)=g(t).
\] 
\end{proof}


\section{Finite type invariants}\label{sec5} 

In this section, we investigate the finite type properties 
of the $V_1$- and $V_2$-polynomials. 
We consider finite type invariants with respect to 
two local operations on long virtual knots: 
virtualizations and crossing changes.

\begin{theorem}\label{thm51}
The following statements hold. 
\begin{enumerate}
\item
The $V_{1}$- and $V_{2}$-polynomials 
are not finite type invariants of any degree 
with respect to virtualizations. 
\item
The $V_{1}$- and $V_{2}$-polynomials 
are finite type invariants of degree two 
with respect to crossing changes. 
\end{enumerate}
\end{theorem}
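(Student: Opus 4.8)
The plan is to prove the two parts by reducing each statement to the corresponding known result for the intersection polynomials $F_{ab}(K;t)$, whose finite type behavior is already established in~\cite[Theorems~6.3 and 6.7]{NNSW}. The guiding observation is that both the $V_s$-polynomials and the $F_{ab}$-polynomials are built from exactly the same data: signs $\e_i\e_j$ weighting terms $t^{\alpha_i\cdot\alpha_j}$, indexed over pairs of chords. The difference is merely which pairs are summed over (all type-$a$/type-$b$ pairs for $F_{ab}$, versus only the inwardly or outwardly \emph{linked} pairs for $V_s$), and whether one subtracts the constant $1$ inside each term. Since linkedness of two chords is a condition on the relative order of their four endpoints that is unaffected by a virtualization or crossing change performed at some \emph{other} crossing, the combinatorics of how a local move alters the sum should be parallel in both cases.

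For part~(i), the strategy is to exhibit, for each degree $m$, a family of long virtual knots on which the $m$th-order finite type difference fails to vanish, showing the polynomial cannot have degree $\leq m$ for any fixed $m$. The natural candidates are the diagrams $D(n)$ from the proof of Theorem~\ref{thm43}, for which $V_1(K(n);t)=t^n$: as $n$ grows, the single chord pair $(1,2)$ acquires intersection number $n$, and alternately virtualizing to toggle between $t^n$ and lower powers produces unbounded behavior under any fixed-degree alternating sum. I would first recall that a virtualization at a crossing changes a chord's sign combinatorics in a controlled way, then compute the degree-$m$ relation on a suitable $2^m$-fold family built by iterating the $D(n)$ construction, and verify the resulting alternating sum is a nonzero monomial (or fixed-degree polynomial in $t$) rather than zero. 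This mirrors the argument for $F_{ab}$ in~\cite{NNSW} and is essentially a transplant of that computation to the linked-pair setting.

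For part~(ii), the goal is to show that under crossing changes, the $(m+1)$st-order difference vanishes for $m=2$, i.e. any alternating sum over three independent crossing changes is zero. The key point is that $V_s(D;t)-V_s(D^\#;t)$, where $D^\#$ switches one crossing $c_k$, depends on $c_k$ only through the terms in the sum that \emph{involve} $c_k$ as one of the linked pair, plus the change in intersection numbers $\alpha_i\cdot\alpha_j$ caused by $c_k$ appearing in the count $S_{ij}$ (via the endpoint-sign formula recalled after Theorem~\ref{thm31}). A crossing change at $c_k$ flips $\e_k$, which negates both $c_k$'s own sign contribution and its contribution of $\pm1$ to each $S_{ij}$ for pairs whose region $c_k$ threads. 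I would organize the computation by separating, for each pair $(i,j)$, the dependence on the three toggled crossings, and then show each resulting term is at most quadratic in the sign variables, so that the third difference annihilates it; this is exactly the degree-two finite type condition with respect to crossing changes.

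The main obstacle I anticipate is part~(ii), specifically controlling how the intersection numbers $\alpha_i\cdot\alpha_j$ themselves vary when one of the switched crossings $c_k$ is neither $c_i$ nor $c_j$ but contributes to $S_{ij}$. Unlike the cancellation in $F_{ab}$, where every pair contributes, here the linked-pair restriction means a crossing change at $c_k$ can create or destroy inward/outward linking of \emph{other} pairs only insofar as it changes signs, not the cyclic order of endpoints---so linkedness itself is preserved, and only the weights $\e_i\e_j t^{\alpha_i\cdot\alpha_j}$ move. The delicate bookkeeping is to confirm that the exponent $\alpha_i\cdot\alpha_j$ is an \emph{affine} function of the toggled signs $\e_{k_1},\e_{k_2},\e_{k_3}$ taking values in $\{0,\pm1\}$ contributions, so that $t^{\alpha_i\cdot\alpha_j}$ expands as a product of at most three factors each linear in a single sign variable; granting this, the degree-two vanishing is forced by a pigeonhole on the number of sign variables any single term can depend on. I expect this affine-dependence lemma, extracted from the endpoint-sign description of $\alpha_i\cdot\alpha_j$, to be the crux of the argument.
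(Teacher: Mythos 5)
Your part~(i) is, in substance, the paper's own argument (Lemma~\ref{lem52}): for the knots $K(n)$ of Theorem~\ref{thm43} one virtualizes the $n$ chords $c_3,\dots,c_{n+2}$ of $D(n)$ in all $2^n$ ways, and since deleting all but $s$ of them yields $K(s)$ with $V_1(K(s);t)=t^s$, the degree-$n$ alternating sum equals $\sum_{s=0}^{n}(-1)^{n-s}\binom{n}{s}t^{s}=(t-1)^{n}\ne 0$. One correction of terminology: a virtualization deletes a chord outright; it does not ``change a chord's sign combinatorics.''

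Part~(ii), however, rests on a false premise, and your proposed repair would not save it. You claim that a crossing change at $c_k$ flips $\varepsilon_k$ and thereby alters the contribution of $c_k$ to $S_{ij}$, so that the exponents $\alpha_i\cdot\alpha_j$ vary with the toggled crossings. In fact a crossing change reverses \emph{both} the sign and the orientation of the chord, and these two reversals cancel at the level of endpoint signs: an initial endpoint carrying sign $-\varepsilon_k$ becomes a terminal endpoint carrying sign $-(-\varepsilon_k)\cdot(-1)$, i.e.\ the same sign $-\varepsilon_k$. Hence $S_{ij}$, and with it $\alpha_i\cdot\alpha_j$, is completely unchanged under crossing changes --- as is clear geometrically, since the curves $\alpha_i$ are obtained by smoothing the underlying projection on $\Sigma$ and never see the over/under information. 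Moreover, your fallback (``affine dependence of the exponent plus pigeonhole'') would not prove vanishing even if its premise were granted: if $\alpha_i\cdot\alpha_j=a_0+a_1\varepsilon_{k_1}+a_2\varepsilon_{k_2}+a_3\varepsilon_{k_3}$ with all $a_m\ne 0$, the third alternating difference of $t^{\alpha_i\cdot\alpha_j}$ equals $\pm\,t^{a_0}\prod_{m=1}^{3}\bigl(t^{a_m}-t^{-a_m}\bigr)\ne 0$, since genuine dependence on all three toggles is exactly what a third difference does \emph{not} annihilate. (Indeed, part~(i) is true precisely because virtualization \emph{does} change exponents; if crossing changes did so as well, part~(ii) would be false.) The correct key fact, which is the paper's Lemma~\ref{lem53}, is the opposite of your premise: the contribution $\Psi_{i,j}$ of a pair $(i,j)$ is entirely independent of $\delta_k$ for every $k\notin\{i,j\}$, because inward/outward linkedness and type depend only on the endpoint order and the orientations of $c_i,c_j$, while the exponent depends on nothing that a crossing change alters; thus each term involves at most two of the three toggles and the alternating sum vanishes termwise. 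Finally, you only address ``degree at most two'': to conclude degree \emph{exactly} two one still needs a nonvanishing second difference, which the paper supplies with the two-chord diagram of Figure~\ref{fig:pf-degree2} (giving $1-0-0+0=1\ne 0$), and the statements for $V_2$ then follow from those for $V_1$ via Lemma~\ref{lem41}(iii).
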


Let $D$ be a Gauss diagram representing a long virtual knot $K$, 
and let $C=\{c_1,\dots,c_k\}$ be a subset of the chords of $D$. 
For $\delta_1,\dots,\delta_k\in\{0,1\}$, 
denote by $D_{\delta_1,\dots,\delta_k}$ 
the Gauss diagram obtained from $D$ by 
removing all the chords $c_{i}$ with $\delta_{i}=1$, 
and by $K_{\delta_1,\dots,\delta_k}$ 
the long virtual knot represented by $D_{\delta_1,\dots,\delta_k}$. 

To show that the $V_1$-polynomial is not 
a finite type invariant of any degree 
with respect to virtualizations, 
it suffices to show the following lemma. 
We refer to \cite{GPV} for the precise definition of finite type invariants 
with respect to virtualizations.

\begin{lemma}\label{lem52}
Let $K(n)$ $(n\geq 0)$ be the long virtual knot 
given in the proof of {\rm Theorem~\ref{thm43}}. 
For the subset $C=\{c_3,\dots,c_{n+2}\}$ of chords of $D(n)$, 
we have 
\[\sum_{\delta_3,\dots,\delta_{n+2}\in\{0,1\}}
(-1)^{\delta_3+\dots+\delta_{n+2}}
V_1(K(n)_{\delta_3,\dots,\delta_{n+2}};t)\ne 0.\]
\end{lemma}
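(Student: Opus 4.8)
The plan is to show that each summand is a single monomial depending on $(\delta_3,\dots,\delta_{n+2})$ only through $m:=\delta_3+\dots+\delta_{n+2}$, and then to evaluate the resulting alternating sum by the binomial theorem. The starting point is the computation in the proof of Theorem~\ref{thm43}: for $D(n)$ the only inwardly linked pair of the correct type is $(1,2)$, so that $V_1(K(n);t)=\e_1\e_2 t^{\alpha_1\cdot\alpha_2}=t^n$, forcing $\e_1\e_2=1$ and $\alpha_1\cdot\alpha_2=n$. The chords $c_3,\dots,c_{n+2}$ form a bundle of $n$ mutually parallel chords with the same sign and orientation data, and deleting $m$ of them yields, after relabeling, exactly the Gauss diagram $D(n-m)$. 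First I would make this identification precise by comparing the two Gauss diagrams directly, using that parallel chords are interchangeable and that $c_1$ and $c_2$ are never deleted; this gives $K(n)_{\delta_3,\dots,\delta_{n+2}}=K(n-m)$.

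Granting this, each summand is determined: by Theorem~\ref{thm43} again, $V_1(K(n)_{\delta_3,\dots,\delta_{n+2}};t)=V_1(K(n-m);t)=t^{n-m}$. If one prefers to avoid the reduction to $K(n-m)$, the same value follows directly from the intersection-number formula recalled in Section~\ref{sec3}: deleting chords can never create new linkings and $c_1,c_2$ are untouched, so the reduced diagram still satisfies $J_{\rm in}=\{(1,2)\}$ and $J_{\rm out}=\emptyset$, while each surviving chord of the bundle contributes exactly one unit to the count $S_{12}$, giving $\alpha_1\cdot\alpha_2=n-m$. I expect this to be the one delicate point: one must read off from the figure of $D(n)$ that the bundle chords really are interchangeable (equivalently, that each contributes precisely one unit to the intersection number), so that the summand depends on the deletion pattern only through $m$. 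Everything after this is formal.

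Finally, grouping the $2^n$ sign patterns according to the value of $m$, of which there are $\binom{n}{m}$ for each $m$, I obtain
\[
\sum_{\delta_3,\dots,\delta_{n+2}\in\{0,1\}}(-1)^{\delta_3+\dots+\delta_{n+2}}V_1(K(n)_{\delta_3,\dots,\delta_{n+2}};t)=\sum_{m=0}^{n}\binom{n}{m}(-1)^m t^{n-m}=(t-1)^n.
\]
Since $(t-1)^n$ is a nonzero element of $\Z[t,t^{-1}]$ for every $n\geq 0$, the alternating sum never vanishes, which is exactly the obstruction required for the non-finite-type statement in Theorem~\ref{thm51}(i).
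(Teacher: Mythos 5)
Your proposal is correct and follows essentially the same argument as the paper: identify each reduced diagram $D(n)_{\delta_3,\dots,\delta_{n+2}}$ with $D(s)$ where $s=n-m$ chords survive, use $V_1(K(s);t)=t^s$, and apply the binomial theorem to get $(t-1)^n\ne 0$. The only difference is cosmetic (indexing by deleted chords $m$ rather than surviving chords $s$, plus an optional direct verification of $\alpha_1\cdot\alpha_2=n-m$ that the paper takes for granted).
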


\begin{proof}
Let $s$ denote the number of zeros among 
$\delta_3,\dots,\delta_{n+2}\in\{0,1\}$. 
Since the diagram $D(n)_{\delta_3,\dots,\delta_{n+2}}$ 
represents the long virtual knot $K(s)$, 
we obtain 
\begin{align*}
&\sum_{\delta_3,\dots,\delta_{n+2}\in\{0,1\}}
(-1)^{\delta_3+\dots+\delta_{n+2}}
V_1(K(n)_{\delta_3,\dots,\delta_{n+2}};t)\\
&=\sum_{s=0}^n(-1)^{n-s}\binom{n}{s}
V_1(K(s);t)=(t-1)^n\ne 0. 
\end{align*}
\end{proof}

We next consider finite type invariants with respect to crossing changes.  
In this case, we again use the notation $D_{\delta_1,\dots,\delta_k}$ 
to denote the Gauss diagram obtained from $D$ 
by changing both the signs and the orientations of 
all chords $c_{i}$ with $\delta_{i}=1$. 

\begin{lemma}\label{lem53}
For any Gauss diagram $D$ representing 
a long virtual knot $K$ 
and any crossings $c_1,c_2,c_3$ of $D$, 
we have
\[\sum_{\delta_1,\delta_2,\delta_3\in\{0,1\}}
(-1)^{\delta_1+\delta_2+\delta_3}
V_1(K_{\delta_1,\delta_2,\delta_3};t)=0.\]
\end{lemma}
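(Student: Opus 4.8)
The plan is to expand each $V_1(K_{\delta_1,\delta_2,\delta_3};t)$ as a sum over ordered pairs of chords and to show that the alternating sum vanishes pair by pair. Write $\vec\delta=(\delta_1,\delta_2,\delta_3)$ and $|\vec\delta|=\delta_1+\delta_2+\delta_3$, and let $\e_i(\vec\delta)$ denote the sign of $c_i$ in $D_{\delta_1,\delta_2,\delta_3}$. For each ordered pair $(i,j)$ of distinct chords, record the quantity $f_{ij}(\vec\delta)=\e_i(\vec\delta)\e_j(\vec\delta)$ if $(i,j)\in J_{\rm in}(D_{\vec\delta})$ and $f_{ij}(\vec\delta)=0$ otherwise, so that
\[
V_1(K_{\vec\delta};t)=\sum_{i\neq j}f_{ij}(\vec\delta)\,t^{\alpha_i\cdot\alpha_j}.
\]
The crucial preliminary observation, to be justified next, is that the exponent $\alpha_i\cdot\alpha_j$ is independent of $\vec\delta$. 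Granting this, one interchanges the two summations and reduces the lemma to the assertion that $\sum_{\vec\delta}(-1)^{|\vec\delta|}f_{ij}(\vec\delta)=0$ for every pair $(i,j)$.

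To see that the exponents are unaffected, I would track the effect of a crossing change on the signed endpoints introduced before Figure~\ref{fig:int-calculation}. Changing the sign of a chord $c_k$ from $\e_k$ to $-\e_k$ while simultaneously reversing its orientation interchanges its initial and terminal endpoints; since the terminal endpoint always carries sign $\e_k$ and the initial endpoint $-\e_k$, the sign attached to each physical endpoint of $c_k$ is left unchanged, and the endpoints do not move along the underlying line. Consequently the positional conditions (i)--(iii), the sum $S_{ij}$, and the signs of $\ell_i$ and $\ell_j$ are all insensitive to the crossing changes at $c_1,c_2,c_3$. By the formula for $\alpha_i\cdot\alpha_j$ recalled above (see \cite[Lemma~3.2]{HNNS1}), the intersection number computed in $D_{\vec\delta}$ therefore agrees with the one computed in $D$, for every pair $(i,j)$ and every $\vec\delta$. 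This invariance of the exponents is the technical heart of the argument, and the endpoint bookkeeping is the step I expect to require the most care.

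It then remains to analyze $f_{ij}$. Whether $(i,j)$ lies in $J_{\rm in}(D_{\vec\delta})$ is determined entirely by the positions, orientations, and types of the two chords $c_i$ and $c_j$, and the factor $\e_i(\vec\delta)\e_j(\vec\delta)$ depends only on their signs; no third chord enters the definition. Hence $f_{ij}(\vec\delta)$ depends on $\vec\delta$ only through the components $\delta_m$ with $m\in\{1,2,3\}\cap\{i,j\}$, a set of size at most two. Thus there is always at least one index $m\in\{1,2,3\}$ on which $f_{ij}$ does not depend, and summing over that single variable yields $\sum_{\delta_m\in\{0,1\}}(-1)^{\delta_m}=0$. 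Therefore $\sum_{\vec\delta}(-1)^{|\vec\delta|}f_{ij}(\vec\delta)=0$ for each pair, and the lemma follows. Conceptually, the argument works because every summand of $V_1$ sees at most two crossings once the exponents are known to be constant, which is precisely what forces the third difference to vanish and pins the degree at two.
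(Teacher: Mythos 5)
Your proof is correct and follows essentially the same route as the paper: the paper also decomposes $V_1(K_{\delta_1,\delta_2,\delta_3};t)$ into per-pair contributions $\Psi_{i,j}^{\delta_1,\delta_2,\delta_3}(t)$, interchanges the two summations, and kills each inner alternating sum by observing that the $(i,j)$-term is independent of some $\delta_k$ with $k\in\{1,2,3\}\setminus\{i,j\}$, which always exists since $\{i,j\}$ has at most two elements. The only difference is that you spell out, via the endpoint-sign formula for $\alpha_i\cdot\alpha_j$, why the exponents are unaffected by crossing changes, a point the paper leaves implicit.
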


\begin{proof}
The proof follows the same strategy 
as that of \cite[Lemma~8.3]{NNSW}. 
For $1\leq i,j\leq n$ and $\delta_1,\delta_2,\delta_3\in\{0,1\}$, 
define a Laurent polynomial by 
\[
\Psi_{i,j}^{\delta_1,\delta_2,\delta_3}(t)=
\begin{cases}
\e_i\e_jt^{\alpha_i\cdot\alpha_j} 
& \text{for $(i,j)\in J_{\rm in}(D_{\delta_1,\delta_2,\delta_3})$}, \\
0 & \text{otherwise}. 
\end{cases}\]
Then the alternating sum can be written as 
\begin{align*}
&\sum_{\delta_1,\delta_2,\delta_3\in\{0,1\}}
(-1)^{\delta_1+\delta_2+\delta_3}
\left(\sum_{(i,j)\in J_{\rm in}(D_{\delta_1,\delta_2,\delta_3})}
\e_i\e_jt^{\alpha_i\cdot\alpha_j}\right)\\
&=\sum_{1\leq i,j\leq n}
\left(\sum_{\delta_1,\delta_2,\delta_3\in\{0,1\}}
(-1)^{\delta_1+\delta_2+\delta_3}
\Psi_{i,j}^{\delta_1,\delta_2,\delta_3}(t)
\right).
\end{align*}

Thus it suffices to show that, 
for each fixed pair $(i,j)$, 
the inner sum vanishes. 
By the definition of $\Psi_{i,j}^{\delta_1,\delta_2,\delta_3}(t)$, 
we observe the following. 
\begin{itemize}
\item
If $4\leq i,j\leq n$, 
then $\Psi_{i,j}^{\delta_1,\delta_2,\delta_3}(t)$ 
is independent of $\delta_1,\delta_2,\delta_3$. 
\item
If $1\leq i\leq 3$ and $4\leq j\leq n$, then 
$\Psi_{i,j}^{\delta_1,\delta_2,\delta_3}(t)$ is independent of 
$\delta_k$, where $k\in\{1,2,3\}\setminus\{i\}$. 
\item
If $(i,j)$ with $4\leq i\leq n$ and $1\leq j\leq 3$, then
$\Psi_{i,j}^{\delta_1,\delta_2,\delta_3}(t)$ is independent of 
$\delta_k$, where $k\in\{1,2,3\}\setminus\{j\}$. 
\item
If $(i,j)$ with $1\leq i,j\leq 3$, then
$\Psi_{i,j}^{\delta_1,\delta_2,\delta_3}(t)$ is independent of 
$\delta_k$, where $k\in\{1,2,3\}\setminus\{i,j\}$. 
\end{itemize}
In each case, the inner sum is zero, 
and hence the claim follows. 
\end{proof}

\begin{proof}[Proof of {\rm Theorem~\ref{thm51}.}] 
(i) By Lemma~\ref{lem52}, 
the $V_1$-polynomial is not a finite type invariant 
of any degree with respect to virtualizations. 
Lemma~\ref{lem41}(iii) implies that  
the same conclusion holds for the $V_2$-polynomial. 

(ii) 
By Lemma~\ref{lem53}, 
the $V_1$-polynomial is 
a finite type invariant of degree at most two 
with respect to crossing changes. 
To show that the degree is exactly two, 
consider the long virtual knot $K$ 
represented by the Gauss diagram $D$ 
consisting of two positive chords $c_{1}$ and $c_{2}$, 
as shown in Figure~\ref{fig:pf-degree2}. 
Set $C=\{c_{1},c_{2}\}$. 
A direct computation yields that 
\[
V_1(K_{0,0};t)-V_1(K_{0,1};t)-V_1(K_{1,0};t)+V_1(K_{1,1};t)
=1-0-0+0=1\ne 0.
\]
Hence, the $V_1$-polynomial is a finite type invariant of degree exactly two with respect to crossing changes. 
By Lemma~\ref{lem41}(iii), 
the same statement holds for the $V_2$-polynomial. 
\end{proof}

\begin{figure}[htbp]
  \centering
  \vspace{0.5em}
    \begin{overpic}[]{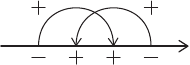}
      \put(30,32){$c_{1}$}
      \put(53,32){$c_{2}$}
    \end{overpic}
  \caption{A Gauss diagram $D$ with two chords $c_{1}$ and $c_{2}$}
  \label{fig:pf-degree2}
\end{figure}


\section{Delta-moves}\label{sec6} 

It is known that, 
if two long classical knots $K$ and $K'$ differ by 
a $\Delta$-move, then $|a_2(K)-a_2(K')|=1$; 
see~\cite{Oka}. 
As a generalization of this fact, 
the $V_1$- and $V_2$-polynomials 
satisfy the following property. 

\begin{proposition}\label{prop61}
If two long virtual knots $K$ and $K'$ 
are related by a single $\Delta$-move, 
then there exist $\e\in\{\pm 1\}$ and 
$n\in{\Z}$ such that 
\[V_1(K;t)-V_1(K';t)=V_2(K;t)-V_2(K';t)=\e t^n.\]
In particular, the difference $V_1(K;t)-V_2(K;t)$ is 
invariant under $\Delta$-moves. 
\end{proposition}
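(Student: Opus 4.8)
The plan is to reduce the statement to a purely local computation on Gauss diagrams. Since $V_1$ and $V_2$ are diagram invariants by Theorem~\ref{thm31}, I would first fix Gauss diagrams $D$ and $D'$ representing $K$ and $K'$ that agree outside the disk in which the $\Delta$-move is performed. A $\Delta$-move affects exactly three crossings, so $D$ and $D'$ differ only in three mutually linked chords $c_1,c_2,c_3$ corresponding to the three strands bounding the triangular region; all remaining chords, together with their associated curves $\alpha_k$, are unaffected. The last point is important and easy to justify: inside the disk each strand joins the same two boundary points before and after the move, so the rerouting is a homotopy rel endpoints within a simply connected region, and hence every old curve $\alpha_k$ keeps its homology class in $H_1(\Sigma)$. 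The coherent orientation of the three arcs around the triangle then forces a sign relation among $\e_1,\e_2,\e_3$ and pins down the types of $c_1,c_2,c_3$, exactly as in cases (v) and (vi) of the proof of Theorem~\ref{thm31}. I would use Lemma~\ref{lem41} to cut down the number of orientation and sign configurations that must be treated.

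Next I would record the homological data. As in the Reidemeister~III analysis, the three new curves satisfy a relation of the form $\alpha_1=\alpha_2+\alpha_3$ in $H_1(\Sigma)$. Consequently the pairwise intersection numbers among $c_1,c_2,c_3$ collapse (for instance $\alpha_1\cdot\alpha_3=\alpha_2\cdot\alpha_3$), and for every old chord $c_k$ the three numbers are constrained by $\alpha_1\cdot\alpha_k=\alpha_2\cdot\alpha_k+\alpha_3\cdot\alpha_k$. I would then split the difference $V_1(D';t)-V_1(D;t)$ according to how many of the two indices lie in $\{1,2,3\}$. Pairs involving no new chord cancel automatically since $D$ and $D'$ agree elsewhere; pairs consisting of one new and one old chord cancel because the sign relation together with the homological relation makes the two relevant terms equal in intersection number and opposite in sign; and the pairs among $c_1,c_2,c_3$ reduce, after the same type of cancellation, to a single surviving monomial $\e t^n$.

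Finally I would carry out the analogous computation for $V_2(D';t)-V_2(D;t)$, where the outwardly linked pairs in $J_{\rm out}$ play the role of the inwardly linked ones, and check that the surviving monomial is again the same $\e t^n$; this matching of both sign and exponent across $V_1$ and $V_2$ is what yields the asserted equality, and the invariance of $V_1(K;t)-V_2(K;t)$ under $\Delta$-moves then follows at once. A possible shortcut for the $V_2$ part is to derive it from the $V_1$ part via Lemma~\ref{lem41}(iii), using that the crossing-change operation $K\mapsto K^{\#}$ carries a $\Delta$-move to a $\Delta$-move; however, one would still have to verify that the exponent $n$ is preserved, so a direct computation is likely cleaner. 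I expect the main obstacle to be precisely the cancellation of the mixed new–old terms in the second paragraph and the verification that the two surviving monomials coincide: both amount to a finite but delicate bookkeeping over the possible $\Delta$-move configurations on Gauss diagrams, which I would organize and shorten using the symmetries provided by Lemma~\ref{lem41}.
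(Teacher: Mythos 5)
Your plan is, in outline, exactly the paper's proof: the paper reduces to a single local configuration using Lemma~\ref{lem41}(i),(ii) together with invariance under Reidemeister moves~II, computes the two differences directly as single monomials, $V_1(K;t)-V_1(K';t)=-t^{\alpha_1\cdot\alpha_3}$ and $V_2(K;t)-V_2(K';t)=-t^{\alpha_3\cdot\alpha_2}$, and then uses the homological relation among the three smoothed curves (in the paper's labelling $\alpha_1+\alpha_2=\alpha_3$) to identify the two exponents. Your disk argument for why homology classes are unchanged by the move is correct, and note that it applies to $\alpha_1,\alpha_2,\alpha_3$ themselves, not only to the old curves. Your closing caveat about the Lemma~\ref{lem41}(iii) shortcut is also exactly right: it would show that the $V_2$-difference is \emph{some} monomial, but not that it equals the $V_1$-monomial, which is why a homological identification of exponents is needed in any case.

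The one step you justify incorrectly is the treatment of mixed (new--old) pairs. They do drop out of the difference, but not ``because the sign relation together with the homological relation makes the two relevant terms equal in intersection number and opposite in sign''; that RIII-style pairing, as in cases (v) and (vi) of Theorem~\ref{thm31}, is neither available nor needed here. Within a single diagram no such pairing exists: if an old chord $c_k$ is linked with, say, $c_1$ and $c_2$, the exponents $\alpha_1\cdot\alpha_k$ and $\alpha_2\cdot\alpha_k$ are in general different (the relation $\alpha_1+\alpha_2=\alpha_3$ only gives $\alpha_1\cdot\alpha_k+\alpha_2\cdot\alpha_k=\alpha_3\cdot\alpha_k$), so these contributions cannot cancel each other inside $V_1(D;t)$. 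The correct and much simpler observation is that a $\Delta$-move leaves every mixed pair literally unchanged: on the Gauss diagram the move merely transposes the two endpoints lying in each of the three small intervals, so for $i\in\{1,2,3\}$ and old $k$ the linking of $c_i$ with $c_k$, the types, the signs, and the classes $\alpha_i,\alpha_k$ (hence $\alpha_i\cdot\alpha_k$) coincide in $D$ and $D'$; the mixed contributions to $V_1(D;t)$ and $V_1(D';t)$ are therefore equal term by term and cancel in the difference. The only data the move changes is the mutual linking of $c_1,c_2,c_3$ (all three linkings are toggled), and that is where the single surviving monomial comes from. With this repair, your plan coincides with the paper's argument.
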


\begin{proof}
By Lemma~\ref{lem41}(i), (ii), together with the invariance 
under Reidemeister moves II, 
it suffices to consider the $\Delta$-move shown in Figure~\ref{fig:Delta}, 
where $c_1,c_2,c_3$ denote the three chords 
involved in this move. 
A direct computation shows that 
\[V_1(K;t)-V_1(K';t)=-t^{\alpha_1\cdot\alpha_3}\text{ and }
V_2(K;t)-V_2(K';t)=-t^{\alpha_3\cdot\alpha_2}.\] 
Since $\alpha_1+\alpha_2=\alpha_3$, 
we have $\alpha_1\cdot\alpha_3=\alpha_3\cdot\alpha_2$,  
and hence, 
\[V_1(K;t)-V_1(K';t)=V_2(K;t)-V_2(K';t).\]
\end{proof}

\begin{figure}[htbp]
  \centering
    \begin{overpic}[]{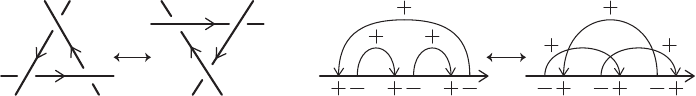}
      \put(60,22){$\Delta$}
      \put(23,-13){$K$}
      \put(95,-13){$K'$}
      \put(240,22){$\Delta$}
      \put(177,14.5){$c_{1}$}
      \put(204.5,14.5){$c_{2}$}
      \put(177,40){$c_{3}$}
      \put(191,-13){$K$}
      \put(277.5,14.5){$c_{1}$}
      \put(304,14.5){$c_{2}$}
      \put(277.5,40){$c_{3}$}
      \put(290,-13){$K'$}
    \end{overpic}
  \vspace{1em}
  \caption{A $\Delta$-move}
  \label{fig:Delta}
\end{figure}

In general, two long virtual knots $K$ and $K'$ 
are not necessarily related by a finite sequence of $\Delta$-moves. 
For example, by Proposition~\ref{prop61}, 
the knots $K(n)$ and $K'(n)$ 
introduced in the proof of Theorem~\ref{thm43} 
cannot be related by $\Delta$-moves; 
in fact, we have 
\[V_1(K(n);t)-V_2(K(n);t)\ne V_1(K'(n);t)-V_2(K'(n);t).\]
When $K$ and $K'$ are related by $\Delta$-moves, 
let $d_{\Delta}(K,K')$ denote 
the minimal number of $\Delta$-moves 
required to transform $K$ into $K'$. 

\begin{theorem}\label{thm62}
Let $K$ and $K'$ be long virtual knots that are 
related by a finite sequence of $\Delta$-moves. 
If $V_1(K;t)-V_1(K';t)=\sum_{i \in \mathbb{Z}}a_i t^i$, 
then 
\[
d_{\Delta}(K,K') \geq \sum_{i \in \mathbb{Z}} |a_i|.
\]
\end{theorem}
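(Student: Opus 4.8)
The plan is to establish the lower bound by tracking how the quantity $\sum_{i \in \mathbb{Z}} |a_i|$ changes along a sequence of $\Delta$-moves, using Proposition~\ref{prop61} as the key per-move estimate. Suppose that $K=K_0, K_1, \dots, K_m=K'$ is a sequence of long virtual knots in which each $K_{\ell-1}$ and $K_{\ell}$ are related by a single $\Delta$-move, where $m=d_\Delta(K,K')$ realizes the minimal number of $\Delta$-moves. By Proposition~\ref{prop61}, for each $\ell$ there exist $\e_\ell \in \{\pm 1\}$ and $n_\ell \in \mathbb{Z}$ such that $V_1(K_{\ell-1};t)-V_1(K_\ell;t)=\e_\ell t^{n_\ell}$. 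Summing these telescoping differences over $\ell=1,\dots,m$ gives
\[
V_1(K;t)-V_1(K';t)=\sum_{\ell=1}^m \e_\ell t^{n_\ell}.
\]

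First I would compare this expression with the given form $V_1(K;t)-V_1(K';t)=\sum_{i \in \mathbb{Z}}a_i t^i$. For each fixed power $i$, the coefficient $a_i$ equals the signed count $\sum_{\ell:\,n_\ell=i}\e_\ell$, so by the triangle inequality $|a_i|\le \sum_{\ell:\,n_\ell=i}|\e_\ell|=\#\{\ell:n_\ell=i\}$. Summing over all $i\in\mathbb{Z}$ yields
\[
\sum_{i\in\mathbb{Z}}|a_i|\le \sum_{i\in\mathbb{Z}}\#\{\ell:n_\ell=i\}=m=d_\Delta(K,K'),
\]
which is exactly the desired inequality. The argument is essentially that each $\Delta$-move contributes a single monomial $\pm t^{n_\ell}$, so the total number of moves is at least the $\ell^1$-norm of the coefficient vector of the difference, since cancellations between monomials with equal exponents can only decrease the coefficient magnitudes.

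The main point requiring care, rather than a genuine obstacle, is the bookkeeping of cancellation: distinct moves may produce the same monomial with opposite signs (so $\e_\ell t^{n_\ell}+\e_{\ell'}t^{n_{\ell'}}=0$ when $n_\ell=n_{\ell'}$ and $\e_\ell=-\e_{\ell'}$), and this is precisely why the inequality is not an equality. I would make explicit that the passage from $\sum_{\ell}\e_\ell t^{n_\ell}$ to $\sum_i a_i t^i$ is a regrouping of terms by exponent, so that $a_i$ is a sum of $\pm 1$'s whose absolute value is bounded by the number of summands. Since $V_1(K;t)-V_2(K;t)$ is a $\Delta$-move invariant by Proposition~\ref{prop61}, the difference $V_1(K;t)-V_1(K';t)$ coincides with $V_2(K;t)-V_2(K';t)$, so the same bound would hold verbatim with $V_2$ in place of $V_1$; I would note this for completeness. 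No deeper input is needed beyond the single-move formula and the triangle inequality.
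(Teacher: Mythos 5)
Your proposal is correct and follows essentially the same route as the paper: both telescope $V_1(K;t)-V_1(K';t)$ over a minimal sequence of $\Delta$-moves, apply Proposition~\ref{prop61} to see that each step contributes a single signed monomial, and conclude by the triangle inequality on the $\ell^1$-norm of the coefficients (your explicit regrouping of terms by exponent is just an unpacked version of the paper's inequality $\|V_1(K;t)-V_1(K';t)\|\leq\sum_j\|V_1(K_{j-1};t)-V_1(K_j;t)\|$).
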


\begin{proof}
Set $d = d_{\Delta}(K,K')$. 
By definition, there exists a sequence of long virtual knots 
\[K = K_0, K_1, K_2, \dots, K_d = K'\]
such that each $K_{j}$ is obtained from $K_{j-1}$ 
by a single $\Delta$-move for $j = 1, \dots, d$. 
By Proposition~\ref{prop61}, 
we have $||V_1(K_{j-1};t)-V_1(K_j;t)||=1$, 
where 
we define $||f(t)||=\sum_{i\in{\Z}}|c_i|$ 
for a Laurent polynomial $f(t)=\sum_{i\in{\Z}}c_it^i$. 
Therefore, 
\[\sum_{i\in{\Z}}|a_i|=||V_1(K;t)-V_1(K';t)||
\leq \sum_{j=1}^d||V_1(K_{j-1};t)-V_1(K_j;t)||=d.\]
\end{proof}


\section{The invariants $V_1'(K;1)$ 
and $V_2'(K;1)$}\label{sec7} 

Although the $V_{1}$- and $V_{2}$-polynomials are not 
finite type invariants of any degree with respect to 
virtualizations, 
their values at $t=1$, 
namely $V_1(K;1)=v_{2,1}(K)$ and $V_2(K;1)=v_{2,2}(K)$, 
are finite type invariants of degree two~\cite{GPV}. 
It is therefore natural to ask whether higher-order
finite type information can be extracted from these polynomials. 
In this section, 
we investigate the integers $V_1'(K;1)$ and $V_2'(K;1)$, 
the first derivatives of 
$V_{1}(K;t)$ and $V_{2}(K;t)$ evaluated at $t=1$.

\begin{lemma}\label{lem71}
The invariant $V_1'(K;1)$ 
admits the following Gauss diagram formula: 
\begin{align*}
V_1'(K;1)&=
\langle D_1,D\rangle-\langle D_2,D\rangle
+\langle D_3,D\rangle-\langle D_4,D\rangle\\
&\quad+\langle D_5,D\rangle-\langle D_6,D\rangle
+\langle D_7,D\rangle-\langle D_8,D\rangle. 
\end{align*}
Here, $D_1,\dots,D_8$ 
are the Gauss diagrams shown in {\rm Figure~\ref{fig:pairing}}, 
and the pairing $\langle D_s,D\rangle$ 
counts the number of subdiagrams of $D$ matching $D_s$, 
weighted by the product of the signs of 
the chords appearing in $D_s$ $(s=1,\dots,8)$. 
\end{lemma}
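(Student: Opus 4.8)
The plan is to compute the derivative $V_1'(K;1)$ directly from the definition
\[
V_1(K;t)=\sum_{(i,j)\in J_{\rm in}(D)}\e_i\e_j\,t^{\alpha_i\cdot\alpha_j},
\]
and then to reinterpret each resulting term as a count of signed subdiagrams of the Gauss diagram $D$. Differentiating termwise and evaluating at $t=1$ gives
\[
V_1'(K;1)=\sum_{(i,j)\in J_{\rm in}(D)}\e_i\e_j\,(\alpha_i\cdot\alpha_j).
\]
So the entire task reduces to expressing the integer $\alpha_i\cdot\alpha_j$ combinatorially and summing. The key tool is the formula for $\alpha_i\cdot\alpha_j$ in terms of $S_{ij}$ recalled in Section~\ref{sec3} (following \cite{HNNS1}), which writes $\alpha_i\cdot\alpha_j$ as a signed count $S_{ij}$ of left-endpoints $\ell_k$ of third chords $c_k$ satisfying one of the three positional conditions (i)--(iii), corrected by $0$, $+1$, or $-1$ according to the signs of $\ell_i$ and $\ell_j$.

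First I would substitute this expression for $\alpha_i\cdot\alpha_j$ into the sum. This splits $V_1'(K;1)$ into two pieces: a \emph{three-chord part}, coming from $S_{ij}$ and involving a pair $(c_i,c_j)$ inwardly linked together with a third chord $c_k$ in one of the configurations (i)--(iii); and a \emph{two-chord correction part}, coming from the $\pm 1$ term and involving only the inwardly linked pair $(c_i,c_j)$ with a prescribed sign pattern on $\ell_i,\ell_j$. The second main step is purely bookkeeping: each term $\e_i\e_j\cdot(\text{sign of }\ell_k)$ in the three-chord part, recorded together with the inward-linking constraint $i\in I_0(D)$, $j\in I_1(D)$ and the type of the over/under strand encoded by which endpoint of each chord carries which sign, corresponds to exactly one of a finite list of local pictures of three oriented signed chords. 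I would enumerate these configurations and check that they are precisely the eight diagrams $D_1,\dots,D_8$ of Figure~\ref{fig:pairing}, with the alternating signs $+,-,+,-,+,-,+,-$ matching the formula; the sign of $\ell_k$ and the product $\e_i\e_j$ combine into the weight $\langle D_s,D\rangle$ assigns (the product of the signs of the chords in $D_s$). Throughout, the correspondence is via the endpoint-sign convention of Figure~\ref{fig:endpoint-sign}, which translates an orientation-plus-sign datum on a chord into signs on its two endpoints.

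The main obstacle I anticipate is the careful matching in this enumeration: I must verify that the two-chord correction terms ($S_{ij}\pm 1$) are either absorbed into the eight-diagram count or cancel, and that no configuration is double-counted or omitted. Concretely, the three conditions (i)--(iii) on $c_k$, combined with the two sign choices for $\ell_k$, the fixed type constraints $i\in I_0(D),\,j\in I_1(D)$, and the relative orientations of $c_i,c_j$ determining inward linking, generate more than eight raw cases; the content of the lemma is that these collapse onto exactly the eight subdiagrams shown, after the sign bookkeeping. I would organize this by fixing the inwardly linked pair, listing the three positional placements of $c_k$ against the orientations, and reading off in each case which $D_s$ arises and with what sign. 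The $\pm 1$ correction contributes terms supported on the two-chord subdiagrams; I expect these to reassemble into the $s$ with $c_k$ degenerating to one of the boundary positions, so that the net effect is already encoded in the stated $\langle D_s,D\rangle$. Verifying this collapse, and confirming the signs agree with the alternating pattern in the statement, is the one genuinely delicate step; the differentiation and the reduction to $\alpha_i\cdot\alpha_j$ are routine.
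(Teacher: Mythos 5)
Your proposal follows essentially the same route as the paper's proof: differentiate termwise to get $V_1'(K;1)=\sum_{(i,j)\in J_{\rm in}(D)}\e_i\e_j(\alpha_i\cdot\alpha_j)$, substitute the formula $\alpha_i\cdot\alpha_j=S_{ij}+\{0,+1,-1\}$ from Section~\ref{sec3}, and convert each piece into signed subdiagram counts. Your anticipated resolution of the one delicate step is the correct one: the $S_{ij}$-part enumerates to exactly the six three-chord pairings $\langle D_1,D\rangle-\langle D_2,D\rangle+\cdots-\langle D_6,D\rangle$ (three positions for $c_k$ times two orientations of $c_k$), while the $\pm1$ corrections are neither absorbed nor cancelled but are precisely the two-chord pairings $+\langle D_7,D\rangle-\langle D_8,D\rangle$, since inward linking forces $(\e_i,\e_j)=(-1,+1)$ when $\ell_i,\ell_j$ are both positive and $(\e_i,\e_j)=(+1,-1)$ when both are negative, so each sign pattern is realized by the single two-chord diagram $D_7$ or $D_8$.
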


\begin{figure}[htbp]
  \centering
    \begin{overpic}[]{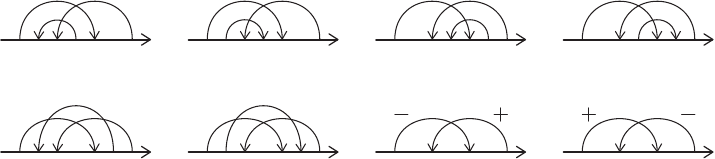}
      \put(32,45){$D_1$}
      \put(122,45){$D_2$}
      \put(212,45){$D_3$}
      \put(302,45){$D_4$}
      \put(32,-11){$D_5$}
      \put(122,-11){$D_6$}
      \put(212,-11){$D_7$}
      \put(302,-11){$D_8$}
    \end{overpic}
  \vspace{1em}
  \caption{The Gauss diagrams $D_1,\dots,D_8$}
  \label{fig:pairing}
\end{figure}

\begin{proof}
By definition, we have $V_1'(K;1)=\sum_{(i,j)\in J_{\rm in}(D)}
\e_i\e_j(\alpha_i\cdot\alpha_j)$. 
The claim therefore follows from expressing  
the intersection number $\alpha_i\cdot\alpha_j$  
in terms of Gauss diagrams, 
as explained in Section~\ref{sec3}.  
More precisely, 
we have 
\[\sum_{(i,j)\in J_{\rm in}(D)}\e_i\e_jS_{ij}=
\langle D_1,D\rangle-\langle D_2,D\rangle
+\langle D_3,D\rangle-\langle D_4,D\rangle
+\langle D_5,D\rangle-\langle D_6,D\rangle.\]
Moreover, 
if the signs of $\ell_i$ and $\ell_j$ are both positive (resp. both negative), 
then $\e_i=-1$ and $\e_j=+1$ (resp. 
$\e_i=+1$ and $\e_j=-1$). 
Therefore, 
\[\sum_{\substack{(i,j)\in J_{\rm in}(D)\\ \ell_i,\ell_j:\,\text{positive}}} \e_i\e_j=
\langle D_7,D\rangle \mbox{ and }
\sum_{\substack{(i,j)\in J_{\rm in}(D)\\ \ell_i,\ell_j:\,\text{negative}}} (-\e_i\e_j)=
-\langle D_8,D\rangle.\]
\end{proof}

We remark that a similar Gauss diagram formula 
for $V_2'(K;1)$ can be obtained from 
Lemmas~\ref{lem41}(iii) and \ref{lem71}; 
the details are left to the reader.

\begin{lemma}\label{lem72}
The invariants $V_1'(K;1)$ and $V_2'(K;1)$ are  
finite type invariants of degree three 
with respect to virtualizations. 
\end{lemma}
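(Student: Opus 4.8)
The plan is to show that the alternating sum of $V_1'(K;1)$ over any four independent virtualizations vanishes, which is exactly the condition for $V_1'(K;1)$ to be a finite type invariant of degree at most three with respect to virtualizations. Recall that a virtualization of a chord $c$ in a Gauss diagram changes the sign of $c$; so if $D$ is a Gauss diagram with four distinguished chords $c_1,c_2,c_3,c_4$, I must verify
\[
\sum_{\delta_1,\delta_2,\delta_3,\delta_4\in\{0,1\}}
(-1)^{\delta_1+\delta_2+\delta_3+\delta_4}
V_1'(K_{\delta_1,\delta_2,\delta_3,\delta_4};1)=0,
\]
where $K_{\delta_1,\dots,\delta_4}$ is obtained from $K$ by virtualizing each $c_i$ with $\delta_i=1$. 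The cleanest route uses the Gauss diagram formula of Lemma~\ref{lem71}: since $V_1'(K;1)=\sum_{s=1}^8 \pm\langle D_s,D\rangle$ and each $D_s$ has exactly three chords, it is enough to prove the alternating sum vanishes for each pairing $\langle D_s,\,\cdot\,\rangle$ individually.

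The key observation is that each $\langle D_s,D\rangle$ counts signed three-chord subdiagrams of $D$. When I expand $\langle D_s, K_{\delta_1,\dots,\delta_4}\rangle$, each contributing subconfiguration of $K_{\delta_1,\dots,\delta_4}$ involves at most three of the four chords $c_1,\dots,c_4$ in an essential way (a subdiagram matching $D_s$ uses only three chords). Therefore, for any fixed subconfiguration, its signed contribution is independent of $\delta_k$ for at least one index $k\in\{1,2,3,4\}$. This is the same mechanism already used in Lemma~\ref{lem53}: a quantity that is independent of one of the summation variables $\delta_k$ contributes zero to an alternating sum over that variable, and hence zero to the full alternating sum. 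First I would set up this bookkeeping carefully, partitioning the subconfigurations according to which of the four distinguished chords they involve, and in each case identifying the free index $\delta_k$.

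The only subtlety, and the \textbf{main obstacle}, is that virtualizing a chord $c_i$ changes its sign $\e_i$ and hence the weight attached to any subdiagram containing $c_i$, but it does \emph{not} change which chords are linked, nor does it alter the intersection numbers $\alpha_i\cdot\alpha_j$ (these are determined by the combinatorics of endpoints, not signs). Thus virtualization acts on the formula only through the sign weights, and the dependence on each $\delta_i$ is \emph{affine} in the relevant sign factor $(-1)^{\delta_i}$. I would make this precise by showing that for a subdiagram using chords with indices in a set $S\subseteq\{1,2,3,4\}$ (with $|S|\le 3$), its total signed contribution to $\langle D_s,K_{\delta_1,\dots,\delta_4}\rangle$ is a product over $i\in S$ of a factor depending on $\delta_i$, times a term independent of all four $\delta$'s; picking any $k\notin S$ exhibits the required independence. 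Summing over the free $\delta_k\in\{0,1\}$ with the alternating sign then collapses to zero.

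Assembling these pieces, the alternating sum over all sixteen choices of $(\delta_1,\delta_2,\delta_3,\delta_4)$ vanishes for each $\langle D_s,\,\cdot\,\rangle$, and therefore for $V_1'(K;1)$ itself, proving it is a finite type invariant of degree at most three. The statement for $V_2'(K;1)$ then follows immediately from Lemma~\ref{lem41}(iii), which relates $V_2$ to $V_1$ under crossing switch, since that symmetry commutes with differentiation and evaluation at $t=1$. To see that the degree is exactly three rather than smaller, I would exhibit an explicit family of Gauss diagrams — most naturally a variant of the $K(n)$ examples from Theorem~\ref{thm43} — on which a suitable alternating sum over three virtualized chords is nonzero, mirroring the degree-exactness argument already carried out for the polynomials in the proof of Theorem~\ref{thm51}.
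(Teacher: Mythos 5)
Your overall strategy coincides with the paper's proof: reduce $V_2'(K;1)$ to $V_1'(K;1)$ via Lemma~\ref{lem41}(iii), expand $V_1'(K;1)$ by the Gauss diagram formula of Lemma~\ref{lem71}, and kill the alternating sum pairing-by-pairing, because each configuration counted by $\langle D_s,\cdot\rangle$ involves at most three chords and is therefore insensitive to at least one of the four indices $\delta_k$. However, your setup contains a genuine error: in this paper (following Goussarov--Polyak--Viro), a virtualization does \emph{not} change the sign of a chord; it \emph{deletes} the chord, i.e., replaces a real crossing by a virtual one. This is exactly how $D_{\delta_1,\dots,\delta_k}$ is defined for virtualizations in Section~\ref{sec5}, whereas flipping the sign (together with the orientation) of a chord is the \emph{crossing change} operation, which governs the other finite type filtration treated in Lemma~\ref{lem53}. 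As written, your alternating sum therefore tests finite-typeness with respect to the wrong local move. The argument is salvageable because your key step --- the contribution of a fixed configuration avoiding $c_k$ is independent of $\delta_k$ --- holds verbatim for deletions: removing $c_k$ affects neither the presence nor the weight of a subdiagram not containing $c_k$. But the proof must be rewritten with the correct operation, and the same correction applies to your degree-exactness step, which is in any case only sketched; the paper takes $K$ to be the three-chord diagram $D_1$ with all signs positive and checks that the alternating sum over its three chords equals $V_1'(K;1)=1\neq 0$ (your $K(n)$-variant also works, e.g.\ $K(1)$ with its three chords deleted in turn, but this has to be verified for deletions, not sign changes).

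A second, related flaw: your parenthetical claim that the intersection numbers $\alpha_i\cdot\alpha_j$ are ``determined by the combinatorics of endpoints, not signs'' is false, and so is the conclusion you draw from it (``virtualization acts on the formula only through the sign weights''). By the formula recalled in Section~\ref{sec3}, $\alpha_i\cdot\alpha_j$ is computed from $S_{ij}$, a sum of endpoint \emph{signs}, so it changes under sign changes, and it also changes under chord deletions --- indeed, this is precisely why $V_1(K;t)$ itself is \emph{not} of finite type with respect to virtualizations (Lemma~\ref{lem52} and Theorem~\ref{thm51}(i)). The correct reason your per-pairing argument works is different: Lemma~\ref{lem71} has already converted the intersection numbers into counts of auxiliary chords, and a pairing $\langle D_s,\cdot\rangle$ against a fixed diagram with at most three chords (note that $D_7$ and $D_8$ have only two, not three) is automatically of finite type of degree at most three with respect to chord deletion. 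With the definition of virtualization corrected and this justification replaced, your proof becomes the paper's.
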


\begin{proof}
By Lemma~\ref{lem41}(iii), 
it suffices to show the statement  for $V_1'(K;1)$. 
The argument is similar to that of Lemma~\ref{lem53}. 
Consider the pairing 
$\langle D_1,D\rangle$. 
For $1\leq i,j,k\leq n$ and 
$\delta_1,\delta_2,\delta_3,\delta_4\in\{0,1\}$, 
define an integer by 
\[
\Psi_{i,j,k}^{\delta_1,\delta_2,\delta_3,\delta_4}=
\begin{cases}
\e_i\e_j\e_k
& \text{if $c_i,c_j,c_k$ are the three chords of 
$D_1\subset D_{\delta_1,\delta_2,\delta_3,\delta_4}$}, \\
0 & \text{otherwise}. 
\end{cases}\]
Then 
\begin{align*}
&\sum_{\delta_1,\delta_2,\delta_3,\delta_4\in\{0,1\}}
(-1)^{\delta_1+\delta_2+\delta_3+\delta_4}
\langle D_1,D_{\delta_1,\delta_2,\delta_3,\delta_4}\rangle\\
&=\sum_{1\leq i,j,k\leq n}
\left(\sum_{\delta_1,\delta_2,\delta_3,\delta_4\in\{0,1\}}
(-1)^{\delta_1+\delta_2+\delta_3+\delta_4}
\Psi_{i,j,k}^{\delta_1,\delta_2,\delta_3,\delta_4}
\right).
\end{align*}
Since $\Psi_{i,j,k}^{\delta_1,\delta_2,\delta_3,\delta_4}$ 
is independent of some $\delta_{\ell}$ 
with $\ell\in\{1,2,3,4\}\setminus\{i,j,k\}$, 
the inner alternating sum vanishes. 
The same argument applies to the other pairings 
$\langle D_s,D\rangle$. 
Hence, we obtain 
\[\sum_{\delta_1,\delta_2,\delta_3,\delta_4\in\{0,1\}}
(-1)^{\delta_1+\delta_2+\delta_3+\delta_4}
V_1'(K_{\delta_1,\delta_2,\delta_3,\delta_4};1)=0.\]

To see that the degree is exactly three, 
consider the long virtual knot $K$ 
represented by the Gauss diagram $D_1$ in Figure~\ref{fig:pairing}, 
where all three chords, say $c_1,c_2,c_3$, 
have positive signs. 
Set $C=\{c_1,c_2,c_3\}$. 
A direct computation shows that 
\[\sum_{\delta_1,\delta_2,\delta_3\in\{0,1\}}
(-1)^{\delta_1+\delta_2+\delta_3}
V_1'(K_{\delta_1,\delta_2,\delta_3};1)=
V_1'(K_{0,0,0};1)=V_1'(K;1)=1.\]
Thus, $V_1'(K;1)$ is a finite type invariant 
of degree exactly three 
with respect to virtualizations. 
\end{proof}

A \emph{long welded knot} is an equivalence class 
of long virtual knots under welded moves. 
At the level of Gauss diagrams, 
a \emph{welded move} exchanges 
two consecutive initial endpoints of chords. 
The welded theory was first introduced 
by Fenn, Rim\'{a}nyi, and Rourke~\cite{FRR} 
in the context of braids. 
By definition, the invariant $V_1(K;1)=v_{2,1}(K)$ 
extends to an invariant of long welded knots 
and is a finite type invariant of degree two; 
see~\cite{HKS,MY}. 

\begin{lemma}\label{lem73}
The invariant $V_1'(K;1)$ is invariant 
under welded moves, 
and hence defines an invariant of long welded knots. 
\end{lemma}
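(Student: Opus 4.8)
The plan is to work entirely with the Gauss diagram formula for $V_1'(K;1)$ provided by Lemma~\ref{lem71}, writing $V_1'(K;1)=\sum_{s=1}^{8}(-1)^{s+1}\langle D_s,D\rangle$. Since each pairing $\langle D_s,D\rangle$ is a signed count of subdiagrams, it suffices to show that this alternating sum is unchanged when $D$ is modified by a single welded move, that is, when two consecutive initial endpoints (tails) of $D$ are transposed. The first step is to record the two structural facts that hold because such a move never slides a tail across a head: the sign of every chord is preserved, and the type ($I_0$ versus $I_1$) of every chord is preserved, since the type is determined solely by whether a chord's tail lies to the left or right of its head.

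From these facts I would deduce that the set $J_{\rm in}(D)$ of inwardly linked pairs is itself unchanged: a pair can enter or leave $J_{\rm in}$ only if its linking or its type changes, and reordering the endpoints of a fixed pair requires transposing its two tails, which are never adjacent when the pair is inward (the two heads always lie between them). In particular the two-chord pairings $\langle D_7,D\rangle$ and $\langle D_8,D\rangle$ are invariant. Inspecting the six three-chord diagrams, the tails of $D_1$ and of $D_4$ occupy non-adjacent positions, so no transposition of two consecutive tails can involve both tails of a matched subdiagram; hence $\langle D_1,D\rangle$ and $\langle D_4,D\rangle$ are invariant as well. The only diagrams possessing two consecutive tails are $D_2,D_3,D_5,D_6$, and these are the only pairings that can change.

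The heart of the argument is to show that the transposition induces a weight-preserving bijection between the affected subdiagrams matching $D_2$ and those matching $D_6$, and likewise between $D_3$ and $D_5$. Concretely, a subdiagram on chords $c_i,c_j,c_k$ matching $D_2$ (inward pair $(c_i,c_j)$ with auxiliary chord $c_k$ satisfying condition~(i)) becomes, after transposing the two adjacent tails, a subdiagram matching $D_6$ in which $(c_k,c_j)$ is the inward pair and $c_i$ is the auxiliary chord satisfying condition~(iii); the three chords, and hence the product of their signs, are unchanged, and both $D_2$ and $D_6$ enter the formula with coefficient $-1$. An analogous computation interchanges $D_3$ and $D_5$, both entering with coefficient $+1$. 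Consequently the migrations cancel in the alternating sum, and $V_1'(K;1)$ is invariant under welded moves. I expect the main obstacle to be the bookkeeping in this last step: carefully tracking how the six endpoints of each three-chord subdiagram are reordered by the transposition, verifying in each case that the resulting configuration is exactly the partner diagram in our list rather than some configuration matching none of $D_1,\dots,D_8$, and confirming that the coefficients attached to the partner diagrams agree so that the contributions truly cancel.
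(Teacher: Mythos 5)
Your proposal is correct and takes essentially the same approach as the paper's proof: both argue directly from the Gauss diagram formula of Lemma~\ref{lem71}, noting that a welded move preserves the pairings $\langle D_s,D\rangle$ for $s\in\{1,4,7,8\}$ individually (since those diagrams have no two adjacent initial endpoints) and exchanges the affected subdiagrams matching $D_2$ with those matching $D_6$, and $D_3$ with $D_5$, so that the sums $\langle D_2,D\rangle+\langle D_6,D\rangle$ and $\langle D_3,D\rangle+\langle D_5,D\rangle$ are invariant and the contributions cancel in the alternating sum. Your write-up in fact supplies more of the combinatorial justification (the tail-position analysis and the explicit role swap of the inward pair and the auxiliary chord) than the paper, which merely asserts these bijections.
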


\begin{proof}
Let $D$ and $D'$ be Gauss diagrams representing 
long virtual knots $K$ and $K'$, respectively, 
which differ by a single welded move. 
For each $s\in\{1,4,7,8\}$, the welded move induces a natural bijection 
between subdiagrams of $D$ and of $D'$ matching $D_{s}$. 
Consequently, $\langle D_s,D\rangle=\langle D_s,D'\rangle$. 
On the other hand, we obtain 
\begin{align*}
&\langle D_2,D\rangle+\langle D_6,D\rangle=
\langle D_2,D'\rangle+\langle D_6,D'\rangle, \\
&\langle D_3,D\rangle+\langle D_5,D\rangle=
\langle D_3,D'\rangle+\langle D_5,D'\rangle. 
\end{align*}
Therefore, by Lemma~\ref{lem71}, we conclude that 
$V_1'(K;1)=V_1'(K';1)$. 
\end{proof}

Let $\Delta(K;t)$ denote 
the normalized Alexander polynomial 
of a long welded knot~$K$, 
characterized by $\Delta(K;1)=1$ and $\Delta'(K;1)=0$. 
For $n\geq 2$, a sequence of invariants is defined by 
\[\alpha_n(K)=
\left.\frac{1}{n!}\frac{d^n}{dt^n}\Delta(K;t)\right|_{t=1},\] 
called the \emph{$\alpha_n$-invariant} of $K$. 
In particular, $\alpha_2(K)=v_{2,1}(K)$. 
The invariant $\alpha_n(K)$ is a finite type invariant 
of degree $n$ with respect to virtualizations. 
We refer the reader to~\cite{HKS,MY} for further details.

\begin{theorem}\label{thm74}
For any long welded knot $K$, 
we have 
\[V_1'(K;1)=\alpha_2(K)+\alpha_3(K).\]
\end{theorem}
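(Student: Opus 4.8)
The plan is to establish the identity $V_1'(K;1)=\alpha_2(K)+\alpha_3(K)$ by exploiting the finite type structure of both sides. Both $V_1'(K;1)$ and $\alpha_3(K)$ are finite type invariants of degree three with respect to virtualizations (the former by Lemma~\ref{lem72}, the latter by the cited results of~\cite{HKS,MY}), while $\alpha_2(K)=v_{2,1}(K)=V_1(K;1)$ is of degree two. Hence the difference $V_1'(K;1)-\alpha_2(K)-\alpha_3(K)$ is a finite type invariant of degree at most three with respect to virtualizations. The strategy is to show this difference vanishes identically by appealing to the classification of such invariants and checking agreement on a spanning set of low-degree configurations.

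First I would set up the comparison at the level of Gauss diagram formulae. The invariant $V_1'(K;1)$ has the explicit formula from Lemma~\ref{lem71} in terms of the eight pairings $\langle D_s,D\rangle$. In parallel, the $\alpha_2$- and $\alpha_3$-invariants of long welded knots admit known Gauss diagram formulae from~\cite{HKS,MY}; I would write these out explicitly as weighted counts of subdiagrams. Since all three invariants are now confirmed welded invariants (using Lemma~\ref{lem73} for $V_1'(K;1)$), the entire comparison can be carried out in the welded category, where the relevant space of degree-$\leq 3$ finite type invariants is finite-dimensional and spanned by a manageable list of arrow-diagram configurations.

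The core step is then a finite verification: I would evaluate both sides on a basis of long welded knots that spans the space of degree-three finite type invariants modulo lower degree, for instance the family $K(n)$ from the proof of Theorem~\ref{thm43} together with the generating three-chord diagrams such as $D_1,\dots,D_8$ of Figure~\ref{fig:pairing}. On each such generator I would compute $V_1'(K;1)$ directly from Lemma~\ref{lem71} and compute $\alpha_2(K)+\alpha_3(K)$ from the normalization $\Delta(K;1)=1$, $\Delta'(K;1)=0$ and the Alexander polynomial of the diagram. Matching the values on a spanning set forces the degree-$\leq 3$ difference to be zero.

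\textbf{The main obstacle} will be controlling the $\alpha_3$-invariant concretely enough to match it term-by-term against the combinatorial expression for $V_1'(K;1)$: the normalized Alexander polynomial is defined analytically, and extracting its degree-three Taylor coefficient as an explicit signed count of Gauss-diagram subconfigurations requires either invoking a precise formula from~\cite{HKS,MY} or rederiving one. A secondary subtlety is ensuring the spanning set is genuinely complete for degree three in the welded setting, so that agreement on the chosen generators truly implies the identity rather than merely agreement in a proper subspace; this is where a clean dimension count or an appeal to the known structure of welded finite type invariants from~\cite{HKS,MY} becomes essential.
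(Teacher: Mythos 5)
Your proposal takes essentially the same approach as the paper: combine Lemma~\ref{lem72} and Lemma~\ref{lem73} with the fact that the space of degree-three finite type invariants of long welded knots with respect to virtualizations is spanned by $\{1,\alpha_2,\alpha_3\}$ (the paper cites \cite{BD,MY}), write $V_1'(K;1)=p+q\alpha_2(K)+r\alpha_3(K)$, and determine $p,q,r$ by evaluating on finitely many test knots whose $\alpha_3$-values are computed independently from the normalized Alexander polynomial. The paper's finite check uses the trivial knot, $K_7$, and $-K_7$ in place of your $K(n)$ family, but the logic is identical.
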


\begin{proof}
The space of finite type invariants 
of degree three for long welded knots 
with respect to virtualizations 
is spanned by $\{1,\alpha_2,\alpha_3\}$; 
see~\cite{BD,MY}. 
By Lemmas~\ref{lem72} and \ref{lem73}, 
there exist $p,q,r\in{\Q}$ such that 
\[V_1'(K;1)=p+q\alpha_2(K)+r\alpha_3(K).\]  

Let $O$ denote the trivial long welded knot, 
and let $K_7$ be the long welded knot 
represented by the Gauss diagram $D_7$ in Figure~\ref{fig:pairing}. 
The values of $V_1'(K;1)$, $\alpha_2(K)$, and $\alpha_3(K)$ 
for $K=O,K_7,-K_7$ are listed in Table~\ref{table:values}. 
From the relations
\[0=p+0+0, \ -1=p-q+0, \text{ and }
1=p-q+2r,\]
we obtain $p=0$ and $q=r=1$. 
\end{proof}

\begin{center}
\begin{table}[htb]
\caption{Values of the invariants for $K=O,K_7,-K_7$}
\begin{tabular}{c|cc|ccc}
$K$ & $V_1(K;t)$ & $V_1'(K;1)$ & $\Delta(K;t)$ 
& $\alpha_2(K)$ & $\alpha_3(K)$ \\
\hline \hline
$O$ & $0$ & $0$ & $1$ & $0$ & $0$ \\
\hline
$K_7$ & $-t$ & $-1$ & $-t^2+2t$ & $-1$ & $0$ \\
\hline
$-K_7$ & $-t^{-1}$ & $1$ & 
$2t^{-1}-t^{-2}$ & $-1$ & $2$ \\
\end{tabular}
\label{table:values}
\end{table}
\end{center}

We remark that any long welded knot $K$ satisfies that 
\[V_1'(-K;1)=-V_1'(K;1), \ 
\alpha_2(-K)=\alpha_2(K), \text{ and }
\alpha_3(-K)=-2\alpha_2(K)-\alpha_3(K).
\]

\begin{example}\label{ex75}
Consider the long welded knot $K(n)$ 
represented by the Gauss diagram $D(n)$ in Figure~\ref{fig:Dn}. 
Since $V_1(K(n);t)=t^n$, 
we have 
\[V_1'(K(n);1)=n\text{ and }
\alpha_2(K(n))=V_1(K(n);1)=1.\]
By Theorem~\ref{thm74}, we obtain 
$\alpha_3(K(n))=n-1$. 

Indeed, the normalized Alexander polynomial 
of $K(n)$ is given by 
\[\Delta(K(n);t)=t^{n+1}-2t^n+t^{n-1}+1.\]
Since  $\Delta'''(K(n);1)=6(n-1)$, 
we obtain $\alpha_3(K(n))=n-1$. 
\end{example}

\begin{corollary}\label{cor76}
The $\alpha_{3}$-invariant admits the following Gauss diagram formula: 
\begin{align*}
\alpha_{3}(K)&=
\langle D_1,D\rangle-\langle D_2,D\rangle
+\langle D_3,D\rangle-\langle D_4,D\rangle \\
&\quad+\langle D_5,D\rangle-\langle D_6,D\rangle
-2\langle D_8,D\rangle-\langle D_9,D\rangle
-\langle D_{10},D\rangle, 
\end{align*}
where $D_{9}$ and $D_{10}$ are the Gauss diagrams shown in 
{\rm Figure~\ref{fig:pairing2}}. 
\end{corollary}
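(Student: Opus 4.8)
The plan is to derive the formula for $\alpha_3(K)$ directly from the relation $\alpha_3(K) = V_1'(K;1) - \alpha_2(K)$, which follows immediately from Theorem~\ref{thm74}. The strategy is therefore to substitute the Gauss diagram formula for $V_1'(K;1)$ from Lemma~\ref{lem71} and an appropriate Gauss diagram formula for $\alpha_2(K) = v_{2,1}(K)$, and then simplify the resulting expression into the stated form involving $D_1,\dots,D_6$ together with the new diagrams $D_8, D_9, D_{10}$.

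First I would recall that $\alpha_2(K) = v_{2,1}(K) = V_1(K;1)$, which by definition equals $\sum_{(i,j)\in J_{\rm in}(D)} \e_i\e_j$. The task is to express this sum as a pairing with Gauss diagrams. Since $\alpha_2(K)$ counts inwardly linked pairs $(i,j)$ with $i \in I_0(D)$, $j \in I_1(D)$, weighted by $\e_i\e_j$, this is a two-chord pairing; referring back to the endpoint-sign conventions of Figure~\ref{fig:endpoint-sign}, the contributions split according to the signs of the left endpoints $\ell_i$ and $\ell_j$, exactly as in the derivation of $D_7$ and $D_8$ in the proof of Lemma~\ref{lem71}. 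I would identify $\langle D_7, D\rangle$ with the positive-endpoint contributions and $\langle D_8, D\rangle$ with the negative-endpoint contributions, so that $\alpha_2(K) = \langle D_7,D\rangle + \langle D_8, D\rangle$ up to the sign bookkeeping already fixed in Lemma~\ref{lem71}. The new diagrams $D_9$ and $D_{10}$ in Figure~\ref{fig:pairing2} should then account precisely for the cross-type inward-linking pairs that contribute to $v_{2,1}$ but are not captured by $D_7$ and $D_8$ alone.

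Next, forming the difference $V_1'(K;1) - \alpha_2(K)$, the eight-term formula of Lemma~\ref{lem71} retains its first six terms $\langle D_1,D\rangle - \langle D_2,D\rangle + \langle D_3,D\rangle - \langle D_4,D\rangle + \langle D_5,D\rangle - \langle D_6,D\rangle$ unchanged, since these arise from the $S_{ij}$ portion of the intersection number and involve no adjustment from $\alpha_2$. The remaining terms $+\langle D_7,D\rangle - \langle D_8, D\rangle$ from Lemma~\ref{lem71}, which encode the $\pm 1$ correction to $\alpha_i\cdot\alpha_j$, must be combined with the subtracted $\alpha_2(K)$ expression. This combination is where the $D_7$ contribution should cancel and the coefficients $-2\langle D_8, D\rangle - \langle D_9, D\rangle - \langle D_{10}, D\rangle$ emerge. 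I expect the bookkeeping to force the $+\langle D_7,D\rangle$ term to annihilate against part of $\alpha_2$, while the $-\langle D_8, D\rangle$ term doubles to $-2\langle D_8,D\rangle$ and the residual inward-linking pairs produce the $D_9$ and $D_{10}$ contributions.

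The main obstacle will be pinning down the exact combinatorial identification of $D_9$ and $D_{10}$ and verifying the coefficient arithmetic: one must check carefully which configurations of the two chords $c_i, c_j$ (inwardly linked, with prescribed types and endpoint signs) are double-counted or undercounted when passing from the intersection-number correction $\pm 1$ to the plain count $\e_i\e_j$ in $\alpha_2$. This requires a case analysis over the signs of $\ell_i$ and $\ell_j$ exactly as in Lemma~\ref{lem71}, tracking how the $+1$ and $-1$ cases of $\alpha_i\cdot\alpha_j$ interact with the subtraction. Once the diagrams $D_9, D_{10}$ are correctly matched to the surviving sign configurations, the formula follows by direct substitution, and I would close by confirming the total coefficients against a worked example (for instance $K(n)$ from Example~\ref{ex75}, where $\alpha_3(K(n)) = n-1$ is known) to validate the normalization.
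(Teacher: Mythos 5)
Your proposal is correct and follows essentially the same route as the paper: by Theorem~\ref{thm74}, $\alpha_3(K)=V_1'(K;1)-\alpha_2(K)$, and substituting the eight-term formula of Lemma~\ref{lem71} together with the Gauss diagram formula $\alpha_2(K)=\langle D_7,D\rangle+\langle D_8,D\rangle+\langle D_9,D\rangle+\langle D_{10},D\rangle$ makes the $D_7$ terms cancel and yields the stated coefficients. The only difference is that the paper simply quotes this four-term formula for $\alpha_2$ outright, whereas you re-derive its shape from the endpoint-sign case analysis, which is sound but unnecessary.
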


\begin{figure}[htbp]
  \centering
    \begin{overpic}[]{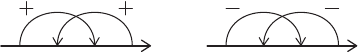}
      \put(32,-11){$D_9$}
      \put(129,-11){$D_{10}$}
    \end{overpic}
  \vspace{1em}
  \caption{The Gauss diagrams $D_9$ and $D_{10}$}
  \label{fig:pairing2}
\end{figure}

\begin{proof}
Since the Gauss diagram formula of the $\alpha_{2}$-invariant is given by 
\[
\alpha_{2}(K)=
\langle D_7,D\rangle+\langle D_8,D\rangle
+\langle D_9,D\rangle+\langle D_{10},D\rangle, 
\]
the claim follows from 
Theorem~\ref{thm74}. 
\end{proof}


\end{document}